\def\rhat{\hat r}
\newcommand\pthroot[1]{\sqrt[\leftroot{-2}\uproot{2}p]{#1}}
\DeclareMathOperator*{\argmin}{arg\,min}
\newtheorem{theorem}{Theorem}[section]
\newtheorem{lemma}{Lemma}[section]
\newcommand{\ignore}[1]{}
\title{
Approximating the pth root by composite rational functions
}
\author{Evan S. Gawlik\thanks{Department of Mathematics, University of Hawaii at Manoa, egawlik@hawaii.edu} \;and Yuji Nakatsukasa\thanks{Mathematical Institute, University of Oxford, and National Institute of Informatics, 
nakatsukasa@maths.ox.ac.uk}}
\date{}
\begin{document}
\maketitle

\begin{abstract}
A landmark result from rational approximation theory states that $x^{1/p}$ on $[0,1]$ can be approximated by a type-$(n,n)$ rational function with root-exponential accuracy. 
Motivated by the recursive optimality property of Zolotarev functions (for the square root and sign functions), we investigate approximating $x^{1/p}$ by composite rational functions of the form 
$r_k(x, r_{k-1}(x, r_{k-2}( \cdots (x,r_1(x,1)) )))$. 
While this class of rational functions ceases to contain the minimax (best) approximant for $p\geq 3$, we show that it achieves approximately $p$th-root exponential convergence with respect to the degree. 
Moreover, crucially, the convergence is \emph{doubly exponential} with respect to the number of degrees of freedom, suggesting that composite rational functions are able to approximate $x^{1/p}$ and related functions (such as $|x|$ and the sector function) with exceptional efficiency. 
\end{abstract}


\section{Introduction}

Composing rational functions is an efficient way of generating a rational function $r(x)=r_k(\cdots r_2(r_1(x)))$ of high degree: if each $r_i$ is of type $(m,m)$, then $r$ is of type $(m^k,m^k)$. By choosing each $r_i$ appropriately, one can often obtain a function $r$ that approximates a desired function in a wide domain of interest. 

There is no reason to expect---and it is generally not true---that 
$r_k(\cdots r_2(r_1(x)))$ can express the minimax rational approximant of a given type, say $(m^k,m^k)$, to a given function. 
However, building upon Rutishauser~\cite{rutishauser1963betrachtungen} and Ninomiya~\cite{ninomiya1970best}, Nakatsukasa and Freund~\cite{mysirev} show a remarkable property of the best rational approximants to the function $\operatorname{sign}(x)=x/|x|$ on $[-1,-\delta]\cup [\delta,1]$ for $0<\delta<1$ (called Zolotarev functions): appropriately composing Zolotarev functions gives another Zolotarev function of higher degree. In other words, the class of composite rational functions $r(x)=r_k(\cdots r_2(r_1(x)))$, with each $r_i$ of type $(m,m)$, contains the type-$(m^k,m^k)$ minimax approximant to the sign function.
Moreover, for a fixed $\delta$, the convergence of Zolotarev functions is exponential in the degree. Since the degree is $m^k$, and the number of parameters necessary to express $r$ is $d\approx 2km$, 
it follows that the convergence is $\exp(-m^k) = \exp(-\exp(C d))$, a \emph{double-exponential} convergence rate. This is so powerful that choosing $m=17$ and $k=2$ (one composition, i.e., two iterations) is enough to obtain convergence to machine precision in double precision arithmetic, with error below $10^{-15}$. 

Functions related to the sign function, such as 
$|x|$ (via $|x|=x/\mbox{sign}(x)$) and $\sqrt{x}$ 
(via $|x|\approx p(x^2)/q(x^2)$ then $\sqrt{x}\approx p(x)/q(x)$) can similarly be approximated by composite rational functions. Gawlik~\cite{gawlik2018zolotarev} does this for the square root and shows that 
a composite rational function yields the minimax rational approximant (in the relative sense) on intervals $[\delta,1] \subset (0,1]$, and that the approximation extends far into the complex plane.  This observation generalizes earlier work on rational approximation of the square root with optimally scaled Newton iterations~\cite{Beckermannmatfun,ninomiya1970best,rutishauser1963betrachtungen,wachspress1962}.
Moreover, an extension was derived in \cite{gawlik2018pth}, which shows that the $p$th root can be approximated efficiently on intervals $[\delta,1] \subset (0,1]$, although not with minimax quality. 

Clearly, in the above papers the origin is excluded from the domain, as the functions have a singularity at $x=0$. However, a landmark result from rational approximation theory~\cite{gonvcar1967rapidity,stahl2003best} states that the best rational approximant (in the absolute sense) of $x^\beta$ (for any real $\beta >0$) on $[0,1]$ can be approximated by a type-$(n,n)$ rational function with root-exponential accuracy. One might wonder, can this be done with a composite rational function? This is the question we address in this paper.  We focus on the case in which $\beta = 1/p$ with $p \ge 2$ an integer.

We show that a rational function of the form $r(x) = r_k(x, r_{k-1}(x, r_{k-2}( \cdots (x,r_1(x,1)) )))$ can approximate $x^{1/p}$ on $[0,1]$ with superalgebraic accuracy, with close to $p$th root-exponential convergence. Moreover---and crucially---the convergence is \emph{doubly exponential} with respect to the number of degrees of freedom.  That is, the error is $O(\exp(-c_1\exp(c_2d)))$ for some constants $c_1,c_2>0$, where $d$ is the number of parameters needed to express the rational function.  By ``number of parameters'' we mean $d=\sum_{i=1}^k m_i+\ell_i+1$ if $r_i$ has type $(m_i,\ell_i)$ for $i=1,2,\dots,k$, so that $d$ reflects the cost of evaluating $r$ at a matrix argument. 

Clearly, our result implies that any rational power of $x$ can be 
approximated by a composite rational function. Moreover, 
since $|r(x)-x^{1/p}|\leq \epsilon$ on $[0,1]$ implies 
$|r(x/s)- (x/s)^{1/p}|\leq \epsilon$ on $[0,s]$ for any $s>0$, hence 
$|s^{1/p}r(x/s)- x^{1/p}|\leq s^{1/p}\epsilon$, 
our results also show that any rational power can be approximated efficiently on $[0,s]$ by a composite rational function. In addition, our approximants to $x^{1/p}$ immediately lead to approximants to the $p$-sector function $\operatorname{sect}_p(z)=z/(z^p)^{1/p}$.

More generally, we think composite (rational) functions are a powerful tool in approximation theory, and we regard this as a contribution towards demonstrating their effectiveness and practicality. 
Indeed, one might say they are already used extensively in scientific computing: 
\begin{enumerate}
\item Composite rational functions are implicitly employed in most algorithms for computing matrix functions~\cite{Higham2008FM}, in which approximating a function on the spectrum of the matrix is required. For the $p$th root, a standard algorithm \cite[Ch.~7]{Higham2008FM} employs Newton's method, which ultimately approximates $A^{1/p}$ with a sequence of rational functions $f_k$ of $A$ given recursively by $f_{k+1}(x) = \frac{1}{p}((p-1)f_k(x)+x/f_k(x)^{p-1})$, $f_0(x)=1$.   The function $f_k$ is composite rational and similar to the approximants we use, but not the same (it is unscaled), and it exhibits exponential rather than double-exponential convergence on $[0,1]$.  
Generally speaking, Newton's method for computing a matrix function $f(A)$ (or more generally for various nonlinear problems, e.g. rootfinding) can often be interpreted as approximating $f(A)$ (or the solution) by a composite rational function of $A$.
\item The rapidly growing subject of deep learning is based on composing a large number of nonlinear activation functions~\cite{lecun2015deep}. 
\end{enumerate}

\paragraph{Summary of Results.}
To summarize our results, let us introduce some terminology.
We say that a univariate rational function $r(x)=p(x)/q(x)$ is of type $(m,\ell)$ if $p$ and $q$ are polynomials of degrees at most $m$ and $\ell$, respectively.
We denote the set of all such rational functions by $\mathcal{R}_{m,\ell}$.
We say that a bivariate rational function $r(x,y)$ is of type $(m,\ell)$ if $r(x,x)$ is of type $(m,\ell)$.  
We say that a univariate rational function $r$ is $(k,m,\ell)$-composite if $r$ is a composition of $k$ rational functions $r_i(x,y)$, $i=1,2,\dots,k$, each of type $(m,\ell)$:
\begin{equation} \label{comp}
r(x) = r_k(x, r_{k-1}(x, r_{k-2}( \cdots (x,r_1(x,1))))).
\end{equation}

Here is the main result of this paper. 
\begin{theorem}\label{thm:main}
Let $p \ge 2$ be an integer.  There exists a positive constant $N$ depending on $p$ such that for every integer $n \ge N$, there exists a $(\lfloor \log_p n \rfloor +1,p,p-1)$-composite rational function $r$ of type $(n,n-1)$ such that
\begin{equation}  \label{eq:mainthm}
\max_{x \in [0,1]} |r(x)-x^{1/p}| \le \exp( -b n^c ),
\end{equation}
where $b>0$ is a constant depending on $p$ and
\begin{equation}
  \label{eq:exponentc}
c = \frac{ \log \left(\frac{p}{p-1}\right) \log 2 }{ \log \left(\frac{2p}{p-1}\right) \log p }.
\end{equation}
\end{theorem}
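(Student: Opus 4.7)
My plan is to construct $r$ as a composition of $k=\lfloor\log_p n\rfloor+1$ scaled Newton-like iterations for the $p$th root and to balance the approximation error on an interval $[\delta,1]$ bounded away from the origin against the absolute error on $[0,\delta]$. Concretely, I would take each bivariate iteration in the form $r_j(x,y)=\alpha_j\bigl((p-1)y^p+\beta_j x\bigr)/(p y^{p-1})$, which is of type $(p,p-1)$ in the bivariate sense, and set $y_0=1$, $y_j=r_j(x,y_{j-1})$. A direct induction on the degree shows that $y_k$ is then of type $(p^{k-1},p^{k-1}-1)$, matching the degree constraint $n=p^{k-1}$ in the theorem. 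The parameters $\alpha_j,\beta_j$ are to be chosen so as to optimize convergence on the target interval $[\delta,1]$, in the spirit of the Zolotarev-type framework of \cite{mysirev,gawlik2018pth} adapted to $p$th roots.

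The first substantive task is to establish \emph{doubly exponential} convergence on $[\delta,1]$: with suitably tuned parameters, I would aim for a relative error bound of the form
\[
\sup_{x\in[\delta,1]} \frac{|y_k(x)-x^{1/p}|}{x^{1/p}} \;\le\; \exp\!\left(-\frac{c_1\,2^k}{L^{q}}\right), \qquad L:=\log(1/\delta),
\]
where $q=\log 2/\log(p/(p-1))$. The exponent $q$ encodes the interplay between the initial linear phase of Newton's method---whose contraction rate $(p-1)/p$ means that about $L/\log(p/(p-1))$ steps are needed to reach the quadratic basin when $x\approx\delta$---and the subsequent quadratic phase, which doubles the number of accurate digits per step. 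For $p=2$ one has $q=1$, reproducing the classical Zolotarev rate.

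Next, I would control the approximation near the origin. Using monotonicity of the scaled Newton map in $x$ and the bound $y_k(\delta)=O(\delta^{1/p})$ inherited from the previous step, one shows $|y_k(x)|=O(\delta^{1/p})$ throughout $[0,\delta]$, and hence $|y_k(x)-x^{1/p}|=O(\delta^{1/p})=O(\exp(-L/p))$ there. Finally, I would optimize over $\delta$ by balancing $c_1 2^k/L^{q}\sim L/p$, which yields $L\sim 2^{k/(q+1)}$ and a total error of $\exp(-c_2\,2^{k/(q+1)})$. Substituting $n=p^{k-1}$, so that $2^k\sim n^{\log 2/\log p}$, converts this to $\exp(-b\, n^{\log 2/((q+1)\log p)})$; inserting the value of $q$ and using $\log 2+\log(p/(p-1))=\log(2p/(p-1))$ reproduces exactly the exponent $c$ in~(\ref{eq:exponentc}).

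The hard part will be the doubly exponential estimate on $[\delta,1]$ for $p\ge 3$: the minimax approximant then ceases to be composite, so the rate cannot be read off from a minimax theorem and must instead be derived from a direct analysis of the iteration. I would proceed by induction on $k$, using at the inductive step a Pad\'e-type bound on the bivariate map $r_j$ given a warm start $y_{j-1}$ of known relative accuracy, together with carefully chosen scalings $\alpha_j,\beta_j$ that progressively stretch the effective convergence interval as $j$ grows. The explicit composite construction of \cite{gawlik2018pth} is the natural starting point.
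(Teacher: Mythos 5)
Your construction and error decomposition match the paper's exactly: the same scaled Newton-type composite iteration from \cite{gawlik2018pth}, the same split into an $O(\delta^{1/p})$ absolute-error bound near the origin versus a warm-up/quadratic relative-error bound on $[\delta,1]$ (the paper's three-stage $k_1,k_2,k_3$ analysis, with your warm-up contraction rate $(p-1)/p$ coming from Lemma~\ref{lemma:lowerbound}'s bound $H(\alpha)>\alpha^{1-1/p}$), and the same balancing of $\delta$ against $k$ that produces the exponent $c$. The one step you gloss over that deserves more care is the monotonicity claim near the origin: the Newton update $r_j(x,y)$ is \emph{not} monotone in $y$, so monotonicity of $y_k$ in $x$ does not propagate by itself---the paper instead tracks $g_k(x)=x/f_k(x^p)$ and propagates the invariant $0\le x g_k'(x)\le g_k(x)\le\alpha_k$ (Lemmas~\ref{lemma:shat}--\ref{lemma:gk}), from which $f_k'\ge 0$ and the bound $\widetilde f_k\le 2\alpha$ on $[0,\alpha^p]$ both follow.
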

Note that when $p=2$, $c=\frac{1}{2}$, and as $p \rightarrow \infty$, $c \sim \frac{1}{p\log p}$.

Let us comment on the theorem. The bound~\eqref{eq:mainthm} shows that by using a $(\lfloor \log_p n \rfloor +1,p,p-1)$-composite rational function we can approximate the $p$th root with ``$1/c$th root''-(nearly $p$th root) exponential accuracy with respect to the degree, which is suboptimal unless $p=2$ (in which case a composite rational function on $[\delta,1]$ is optimal in the relative sense). 

However, the result is still striking in the following sense: the number of degrees of freedom used to express $r$ is just $O(pk)$ for $n\approx p^k$ (see below \eqref{alphaupdate}), and therefore with respect to the degrees of freedom $d$, 
the convergence is 
\begin{equation}\label{eq:DEconvergence}
\max_{x \in [0,1]} |r(x)-x^{1/p}| \le \exp( -b p^{\tilde cd} )  ,
\end{equation} indicating a \emph{double-exponential} convergence with respect to $d$. 

As a byproduct of our analysis, we will obtain analogous results for composite rational approximation of the $p$-sector function $\operatorname{sect}_p(z)=z/(z^p)^{1/p}$ on the set $S_p \subset \mathbb{C}$ given by
\begin{equation} \label{Sp}
S_p = \{ x e^{2\pi i j/p}  \mid x \in [0,1], \, j \in \{1,2,\dots,p\} \}.
\end{equation}
We will also consider the subset $S_{p,\alpha}$ of $S_p$ excluding the origin 
\begin{equation} \label{Sptil}
S_{p,\alpha} = \{ x e^{2\pi i j/p}  \mid x \in [\alpha,1], \, j \in \{1,2,\dots,p\} \}.
\end{equation}

We say that a $(k,m,\ell)$-composite rational function~(\ref{comp}) is \emph{pure} if the functions $r_j(x,y)$ appearing in~(\ref{comp}) are univariate:
\[
r(x) = r_k(r_{k-1}( \cdots (r_1(x)))).
\]
\begin{theorem} \label{thm:sector}
Let $p \ge 2$ be an integer, and $\alpha \in (0,1)$.  
There exists a positive constant $N$ depending on $p$ such that for every integer $n \ge N$, there exist pure $(\lfloor \log_p n \rfloor,1,p)$-composite rational functions $r$ and $q$ of type $(n-p+1,n)$ such that
\begin{equation}  \label{eq:sectthm}
\max_{z \in S_p} \left|z \left( r(z) - \operatorname{sect}_p(z) \right)\right| \le \exp( -b n^c ),
\end{equation}
where $b$ and $c$ are as in Theorem~\ref{thm:main}, and 
\begin{equation}  \label{eq:sectthmhat}
\max_{z \in S_{p,\alpha}} \left| q(z) - \operatorname{sect}_p(z) \right| \le \exp( -{\widehat  b} n^{\widehat c} ),
\end{equation}
where $\widehat b>0$ depends on $\alpha$ and $p$, and $\widehat c = \frac{\log 2}{\log p}$.  
\end{theorem}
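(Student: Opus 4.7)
The plan is to reduce approximating $\operatorname{sect}_p$ to approximating $s^{1/p}$, and then transfer the composite structure via the change of variable $v=z/x$. For $z \in S_p$, one has $z^p \in [0,1]$ and $(z^p)^{1/p} = |z|$, so $\operatorname{sect}_p(z) = z/|z|$. Accordingly I would take $r(z) = z/\tilde r(z^p)$, where $\tilde r$ is the composite rational approximant to $s^{1/p}$ on $[0,1]$ supplied by Theorem~\ref{thm:main}, and $q(z) = z/\tilde\rho(z^p)$, where $\tilde\rho$ is an analogous composite approximant on $[\alpha^p,1]$ (for instance, the one obtained from the unscaled Newton iteration, which converges quadratically once bounded away from the singularity).

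To obtain a pure $(k,1,p)$-composite structure, I apply the substitution $v_j = z/x_j$ to the bivariate iteration $x_{j+1}=\phi_j(x_j,z^p)$ that produces $\tilde r$ (or $\tilde\rho$). When $\phi_j$ has the Newton-compatible form $\phi_j(x,y) = (q_{0,j}\,x^p + q_{p,j}\,y)/(b_j\,x^{p-1})$---with scalars $q_{0,j},q_{p,j},b_j$ tuned per Theorem~\ref{thm:main}'s construction for $r$, and taken as the unscaled Newton weights $q_0=p-1$, $q_p=1$, $b=p$ for $q$---the iteration on $v_j$ becomes the univariate
\[
v_{j+1} \;=\; \frac{b_j\, v_j}{q_{p,j}\, v_j^p + q_{0,j}},
\]
which is of type $(1,p)$. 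Starting from $x_0 = 1$ (so $v_0 = z$) and iterating $k = \lfloor \log_p n \rfloor$ times yields a pure $(k,1,p)$-composite. A direct induction on $k$ shows that $v_k$ has type $(p^k - p + 1, p^k)$, hence also of type $(n-p+1,n)$.

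For the error, use $(z^p)^{1/p} = |z|$ on $S_p$ to write
\[
|z(r(z) - \operatorname{sect}_p(z))| \;=\; \frac{|z|\,\bigl|\,|z| - \tilde r(z^p)\bigr|}{\tilde r(z^p)},
\qquad
|q(z) - \operatorname{sect}_p(z)| \;=\; \frac{\bigl|\,|z| - \tilde\rho(z^p)\bigr|}{\tilde\rho(z^p)}.
\]
On $S_{p,\alpha}$, $|z| \ge \alpha$ and $\tilde\rho(z^p)$ stays uniformly bounded below, so the second bound reduces to the approximation error of $\tilde\rho$; the quadratic convergence of unscaled Newton then gives $O(\exp(-\widehat b\, 2^k)) = O(\exp(-\widehat b\, n^{\widehat c}))$ with $\widehat c = \log 2/\log p$. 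For the first bound I split into cases: when $|z|\ge 2\exp(-bn^c)$, the estimate $\tilde r(z^p) \ge |z|/2$ combined with Theorem~\ref{thm:main} gives an $O(\exp(-bn^c))$ bound; otherwise $|z|$ is itself doubly-exponentially small and the product remains small provided $\tilde r(z^p)$ is bounded below by a quantity decaying only polynomially in $n$. The main obstacle I anticipate is establishing this lower bound on $\tilde r$ near the origin, which requires tracking positivity of the scaled iterates underlying $\tilde r$ through Theorem~\ref{thm:main}'s construction; once such a bound is in hand, the two cases combine to yield~\eqref{eq:sectthm}.
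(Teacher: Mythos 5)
Your structural reduction---passing to $v_j=z/x_j$ to obtain a pure $(k,1,p)$-composite of type $(p^k-p+1,p^k)$---matches the paper exactly. The gap you flag in the estimate for $r$ near the origin is real, and it is precisely where the paper's proof puts its effort: the crux is to show that the sector iterate stays below $1$ on $[0,\alpha]$. The paper proves this by induction directly on $g_k(x)=x/f_k(x^p)$, establishing $0\le xg_k'(x)\le g_k(x)\le\alpha_k$ on $[0,\alpha]$ (Lemma~\ref{lemma:gk}, built on Lemma~\ref{lemma:shat}), which gives $\widetilde g_k(x)\in[0,1)$ and hence $|x(\widetilde g_k(x)-1)|\le x\le\alpha$ with no lower bound on $\widetilde f_k$ required. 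Equivalently, in your notation, Lemma~\ref{lemma:gk} yields $\widetilde f_k(x)\ge\tfrac{2}{1+\alpha_k}\,x^{1/p}>x^{1/p}$ on $[0,\alpha^p]$, i.e. $\tilde r(z^p)\ge|z|$, so $|z(r(z)-\operatorname{sect}_p(z))|\le|z|\le\alpha$ directly---no case split on the size of $|z|$ and no independent lower bound on $\tilde r(0)$ is needed. Without that inductive lemma your argument for \eqref{eq:sectthm} does not close; the case split you sketch would instead require estimating $\widetilde f_k(0)=\tfrac{2\alpha_k}{1+\alpha_k}\prod_{j<k}\tfrac{p-1}{p}\mu(\alpha_j)$ and relating it to $\exp(-bn^c)$, which is feasible but considerably more work.

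For \eqref{eq:sectthmhat} your $q$ is a genuinely different construction from the paper's. The paper uses the same scaled family $\widetilde g_k$ for both bounds, just with $\alpha$ fixed instead of tied to $\epsilon$ (``they coincide for a particular value of $\alpha$''); you instead switch to unscaled Newton. That does yield $\widehat c=\log 2/\log p$, so the stated inequality holds, but the line ``quadratic convergence of unscaled Newton then gives $O(\exp(-\widehat b\,2^k))$'' silently omits the linear warm-up phase, which is exactly where the $\alpha$-dependence of $\widehat b$ is created. Unscaled Newton needs on the order of $\log(1/\alpha)/\log\tfrac{p}{p-1}$ linear iterations before the contraction becomes quadratic, so your $\widehat b$ decays like a power of $\alpha$; the paper's scaled iteration needs only on the order of $\log\log(1/\alpha)/\log\tfrac{p}{p-1}$ pre-asymptotic steps (Lemma~\ref{lemma:lowerbound} and Stage~1 of Section~\ref{sec:convergence}), so its $\widehat b$ decays only like $(\log\tfrac{1}{\alpha})^{-\widehat C}$, as the paper remarks after the proof. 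Your route is correct in spirit but quantitatively much weaker and departs from the paper's construction.
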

It is worth noting that the two rational functions $r$, $q$ are 
generally different---they coincide for a particular value of $\alpha$.
The error in~(\ref{eq:sectthm}) is measured in a weighted norm, which is natural in view of the fact that $\operatorname{sect}_p(z)$ is discontinuous at $z=0$. When $p=2$ and $z \in S_2$, $z\operatorname{sect}_p(z)=|z|$ and 
$c=\frac{1}{2}$, so \eqref{eq:sectthm} recovers the root-exponential convergence of rational approximants to $|x|$ on $[-1,1]$~\cite[Ch.~25]{trefethenatap}. 
By contrast,~\eqref{eq:sectthmhat} shows that a better bound holds for the absolute error if one excludes the neighborhood of the origin. When $p=2$, $\widehat c=1$ and \eqref{eq:sectthmhat} recovers the exponential convergence of Zolotarev functions to the sign function on $[-1,-\alpha]\cup [\alpha,1]$~\cite{elementselliptic,beckermann2017singular}. Our analysis will show that $\widehat b$ decays like a negative power of $\log\frac{1}{\alpha}$ as $\alpha \rightarrow 0$.

\paragraph{Organization.}  
This paper is organized as follows.  In Section~\ref{sec:composite}, we review some theory from~\cite{gawlik2018pth} concerning composite rational approximants of the $p$th root on positive real intervals.  In Section~\ref{sec:bounding}, we study the behavior of these approximants near the origin.  We then prove Theorems~\ref{thm:main} and~\ref{thm:sector} in Section~\ref{sec:convergence}, and we illustrate our results numerically in Section~\ref{sec:examples}.

\section{Composite rational approximation of the $p$th root} \label{sec:composite}
To approximate $x^{1/p}$ on an interval $[\alpha^p,1] \subset (0,1]$, Gawlik~\cite{gawlik2018pth} considers the recursively defined rational function
\begin{align}
f_{k+1}(x) &= f_k(x) \rhat_{m,\ell}\left(\frac{x}{f_k(x)^p},\alpha_k,\pthroot{\;\cdot\;}\right), &\quad f_0(x)&=1, \label{eq:recurse1} \\
\alpha_{k+1} &= \frac{ \alpha_k }{ \rhat_{m,\ell}\left(\alpha_k^p,\alpha_k,\pthroot{\;\cdot\;}\right) }, &\quad \alpha_0 &= \alpha, \label{eq:recurse2}
\end{align}
where $\rhat_{m,\ell}(x,\alpha,\pthroot{\;\cdot\;})$ is (a rescaling of) the \emph{relative} minimax rational approximant of type $(m,\ell) \in \mathbb{N}_0 \times \mathbb{N}_0 \setminus \{(0,0)\}$ on the interval $[\alpha^p,1]$:
\[
\hat{r}_{m,\ell}(x,\alpha,\pthroot{\;\cdot\;}) = \left( \frac{1+\alpha}{2\alpha} \right) r_{m,\ell}(x,\alpha,\pthroot{\;\cdot\;}),
\]
where
\begin{equation} \label{rml}
r_{m,\ell}(\;\cdot\;, \alpha, \pthroot{\;\cdot\;}) = \argmin_{r \in \mathcal{R}_{m,\ell}} \max_{x \in [\alpha^p,1]} \left| \frac{r(x)-x^{1/p}}{x^{1/p}} \right|.
\end{equation}
Gawlik shows that $f_k(x)$ is a rapidly convergent approximant to the $p$th root on $[\alpha^p,1]$. With $k$ recursions, the maximum relative error $|f_k(x)-x^{1/p}|/|x^{1/p}|$ on $[\alpha^p,1]$ decays double exponentially in $k$: it is bounded above by $c_1 \exp(-c_2(m+\ell+1)^k)$ for some $c_1,c_2>0$ depending on $m$, $\ell$, $p$, and $\alpha$.  Importantly, these constants depend very weakly on $\alpha$; the analysis below will implicitly show that when $(m,\ell)=(1,0)$, $c_1$ is independent of $\alpha$ and $c_2$ decays like a negative power of $\log\frac{1}{\alpha}$ as $\alpha \rightarrow 0$, just like $\widehat b$ in~\eqref{eq:sectthmhat}.

Given that~\eqref{eq:recurse1} is an approximant on $[\alpha^p,1]$, which is an interval that excludes the singularity at $x=0$, 
a natural question arises: can we approximate on $[0,1]$? 
Intuitively, 
the function is still continuous at $x=0$ (unlike e.g. the sign or sector function) with $0^{1/p}=0$, and hence it is possible to approximate $x^{1/p}$ on the whole interval $[0,1]$. Indeed Stahl~\cite{stahl2003best} shows that $x^{1/p}$ on $[0,1]$ can be approximated by a type-$(n,n)$ rational function with root-exponential accuracy (we refer to~\cite{braess1986nonlinear,petrushev2011rational} for general results on classical rational approximation theory). 
Can a highly efficient rational approximant be constructed based on recursion as in~\eqref{eq:recurse1}? 
It is important to note that we will necessarily switch to the (more natural) metric of absolute error $|r(x)-x^{1/p}|$ rather than the relative error $|r(x)-x^{1/p}|/|x^{1/p}|$ for this purpose. 

It turns out that the rational function \eqref{eq:recurse1} does a good job approximating on $[0,1]$, when $\alpha$ is chosen carefully: when it is too small, the error is large on $[\alpha^p,1]$ (in fact it is maximal at $x=1$~\cite{gawlik2018pth}).
Conversely if $\alpha$ is too large, the error is large on $[0,\alpha^p]$ (in fact it is $O(\alpha)$ at $x=0$, as we show below). 
A major task undertaken in what follows is to choose  $\alpha$ so that the convergence is optimized, in that the error on $[0,\alpha^p]$ and $[\alpha^p,1]$ are balanced to be approximately the same. 

Our analysis will focus on the lowest-order version of the iteration~(\ref{eq:recurse1}-\ref{eq:recurse2}),  obtained by choosing $(m,\ell)=(1,0)$.  It is shown in~\cite[Proposition 5]{gawlik2018pth} (and elsewhere~\cite{king1971improved,meinardus1980optimal}) that for this choice of $m$ and $\ell$, 
\begin{equation}
  \label{eq:m1ell0}
\rhat_{1,0}(x,\alpha,\pthroot{\;\cdot\;})=\frac{1}{p}
\left((p-1)\mu(\alpha)+\frac{x}{\mu(\alpha)^{p-1}}
\right),\qquad 
\mu(\alpha) =\left(\frac{\alpha-\alpha^p}{(p-1)(1-\alpha)}\right)^{1/p}.
\end{equation}
Thus, when $(m,\ell)=(1,0)$, the iteration~(\ref{eq:recurse1}-\ref{eq:recurse2}) reads
\begin{align}
f_{k+1}(x) &= \frac{1}{p} \left( (p-1) \mu(\alpha_k) f_k(x) + \frac{x}{\mu(\alpha_k)^{p-1} f_k(x)^{p-1}} \right), &\quad f_0(x) &= 1, \label{fupdate} \\
\alpha_{k+1} &= \frac{p \alpha_k}{(p-1)\mu(\alpha_k) + \mu(\alpha_k)^{1-p} \alpha_k^p} , &\quad \alpha_0 &= \alpha. \label{alphaupdate}
\end{align}
Note that $f_k$ is $(k,p,p-1)$-composite since it is of the form~(\ref{comp}) with
\[
r_j(x,y) = \frac{1}{p}\left( \frac{ (p-1) \mu(\alpha_{j-1})^p y^p + x }{ \mu(\alpha_{j-1})^{p-1} y^{p-1} } \right)
\]
for each $j$.  It follows from this observation and an inductive argument that $f_k$ has type $(p^{k-1},p^{k-1}-1)$ for each $k \ge 1$.

We rely heavily on this explicit expression for the particular case $(m,\ell)=(1,0)$, as it lets us analyze the functions in detail, which leads to a constructive proof for Theorem~\ref{thm:main}. 
We note that using larger values of $(m,\ell)$ may result in faster convergence, in particular a larger exponent $c$ than~\eqref{eq:exponentc}. In view of~\eqref{eq:DEconvergence}, the convergence is still doubly exponential, with an improved constant $\tilde c$. However, we do not expect the improvement would be significant.

Moreover, composing low-degree rational functions is an extremely efficient way to construct high-degree rational functions of matrices, and we suspect that our choice $(m,\ell)=(1,0)$ would give the fastest convergence in terms of the number of matrix operations needed to evaluate $r$ at a matrix argument.

\section{Bounding the error on \boldmath${[0,\alpha^p]}$} \label{sec:bounding}

In this section, we analyze the absolute error committed by the function $f_k$ defined by~\eqref{fupdate}--\eqref{alphaupdate} on the interval $[0,\alpha^p]$.  It will be convenient to consider not $f_k$ but the scaled function 
\begin{equation} \label{fktilde}
\widetilde{f}_k(x) = \frac{2\alpha_k}{1+\alpha_k} f_k(x),
\end{equation}
which has the property that~\cite[Theorem 2]{gawlik2018pth}
\begin{equation} \label{eq:error1}
\max_{x \in [\alpha^p,1]} \frac{ \widetilde{f}_k(x) - x^{1/p} } {x^{1/p}} = -\min_{x \in [\alpha^p,1]} \frac{ \widetilde{f}_k(x) - x^{1/p} } {x^{1/p}}  = \frac{1-\alpha_k}{1+\alpha_k} \in (0,1).
\end{equation}
We will prove the following estimate.

\begin{theorem} \label{thm:error0}
Let $\alpha \in (0,1)$.  The function $\widetilde{f}_k$ defined by~\eqref{fupdate}--\eqref{alphaupdate} and~\eqref{fktilde} satisfies
\begin{equation}  \label{eq:bound0alp}
\max_{x \in [0,\alpha^p]} | \widetilde{f}_k(x) - x^{1/p}  | \le 2\alpha  
\end{equation}
for every $k \ge 0$.
\end{theorem}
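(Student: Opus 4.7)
The plan is to reduce the bound $|\widetilde f_k(x)-x^{1/p}|\le 2\alpha$ to the one-sided estimate $0\le \widetilde f_k(x)\le 2\alpha$ on $[0,\alpha^p]$. Indeed, on that interval $x^{1/p}\in[0,\alpha]$, so once both $\widetilde f_k(x)$ and $x^{1/p}$ lie in $[0,2\alpha]$ one has $|\widetilde f_k(x)-x^{1/p}|\le\max(\widetilde f_k(x),x^{1/p})\le 2\alpha$. Positivity $\widetilde f_k(x)\ge 0$ is immediate, since \eqref{fupdate} preserves positivity of $f_k$ from $f_0=1$ and $\alpha_k>0$.

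The two endpoints of $[0,\alpha^p]$ are easy. At $x=\alpha^p$, \eqref{eq:error1} gives $\widetilde f_k(\alpha^p)\le 2\alpha/(1+\alpha_k)\le 2\alpha$. At $x=0$, the iteration collapses to $f_{k+1}(0)=\tfrac{p-1}{p}\mu_k f_k(0)$; using \eqref{alphaupdate}, one checks that $\widetilde f_{k+1}(0)\le\widetilde f_k(0)$, the inequality reducing after clearing denominators to $\mu_k^{1-p}\alpha_k^p+(p-(p-1)\mu_k)\alpha_k\ge 0$, which is manifest because $\mu_k\le 1<p/(p-1)$ (the bound $\mu_k\le 1$ comes from the identity $\mu_k^p=\alpha_k(1+\alpha_k+\cdots+\alpha_k^{p-2})/(p-1)$ read off \eqref{eq:m1ell0}). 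Iterating then gives $\widetilde f_k(0)\le\widetilde f_0(0)=2\alpha/(1+\alpha)<2\alpha$.

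The remaining task---and the main obstacle---is to rule out an interior overshoot on $(0,\alpha^p)$. My approach is to prove by induction on $k$ that $\widetilde f_k$ is monotone non-decreasing on $[0,\alpha^p]$, so that its maximum over the interval is $\widetilde f_k(\alpha^p)\le 2\alpha$. Alongside $f_k>0$ and $f_k\ge x^{1/p}$ (the latter by weighted AM-GM on \eqref{fupdate}), I would propagate the auxiliary inequality $f_k-(p-1)xf_k'\ge 0$ on $[0,\alpha^p]$. Differentiating \eqref{fupdate} gives $p\mu_k^{p-1}f_k^p f_{k+1}'=f_k+(p-1)f_k'(\mu_k^p f_k^p-x)$, and the auxiliary inequality together with $f_k^p\ge x$ forces $f_{k+1}'\ge 0$ by a case split on the sign of $\mu_k^p f_k^p-x$. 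Closing the induction on the auxiliary inequality itself is the delicate step: expanding $f_{k+1}-(p-1)xf_{k+1}'$ yields a numerator of the form $(p-1)\mu_k^p f_k^p(f_k-(p-1)xf_k')+(p-1)^2 x^2 f_k'-(p-2)xf_k$, so for $p=2$ the harmful last term vanishes and the induction closes essentially by concavity of $f_k$; for $p\ge 3$, that term must be absorbed by combining $f_k^p\ge x$, the explicit formula $(p-1)\mu_k^p=\alpha_k+\alpha_k^2+\cdots+\alpha_k^{p-1}$, and crucially the restriction $x\le\alpha^p$---the hardest case being $k=0$ with $\alpha$ small, where $\mu_0^p$ is only of order $\alpha/(p-1)$ and one must exploit that $x$ is simultaneously forced to be $O(\alpha^p)$ rather than $O(1)$. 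This algebraic bookkeeping on the narrow interval $[0,\alpha^p]$ is where I expect the bulk of the technical work to lie.
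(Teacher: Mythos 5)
Your overall strategy matches the paper's: establish that $\widetilde f_k$ (equivalently $f_k$) is nondecreasing on $[0,\alpha^p]$ so that its maximum is attained at $x=\alpha^p$, then invoke~\eqref{eq:error1} there, and close with the elementary estimate $|\widetilde f_k-x^{1/p}|\le\max\{\widetilde f_k,\,x^{1/p}\}\le 2\alpha$. Where you diverge is \emph{how} monotonicity is proved, and that is where your proposal has a genuine gap.

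The paper changes variables to $g_k(x)=x/f_k(x^p)$, which satisfies the scalar recursion $g_{k+1}=\hat s(g_k,\alpha_k)$, and propagates the three inequalities $0\le xg_k'(x)\le g_k(x)\le\alpha_k$ on $[0,\alpha]$ (Lemma~\ref{lemma:gk}), all driven by the one-step estimate $0\le x\hat s'(x,\alpha)\le\hat s(x,\alpha)\le H(\alpha)$ (Lemma~\ref{lemma:shat}). Translated back to $f_k$ on $y\in[0,\alpha^p]$, these inequalities read: $f_k'(y)\ge 0$, $f_k(y)-pyf_k'(y)\ge 0$, and $f_k(y)\ge y^{1/p}/\alpha_k$. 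Notice that all three are strictly stronger than what you propose to carry: your auxiliary inequality has $(p-1)$ where the paper's has $p$, and your lower bound $f_k\ge x^{1/p}$ loses a factor of $1/\alpha_k$. That loss is fatal. Your own simplification of the numerator of $f_{k+1}-(p-1)xf_{k+1}'$ can be pushed one step further to
\[
p\mu_k^{p-1}f_k^p\bigl[f_{k+1}-(p-1)xf_{k+1}'\bigr]=(p-1)\bigl(f_k-(p-1)xf_k'\bigr)\bigl(\mu_k^pf_k^p-x\bigr)+xf_k,
\]
which is manifestly nonnegative once one knows $\mu_k^pf_k(x)^p\ge x$ on $[0,\alpha^p]$. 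But your hypothesis $f_k\ge x^{1/p}$ gives only $f_k^p\ge x$, and since $\mu_k<1$ this does not yield $\mu_k^pf_k^p\ge x$; the coarse bound you would fall back on, $\mu_k^p\ge(p-2)/(p-1)$, fails for small $\alpha$ at $k=0$. The missing ingredient is exactly the sharper lower bound $f_k(x)\ge x^{1/p}/\alpha_k$ (i.e.\ $g_k\le\alpha_k$) combined with the elementary fact $\alpha_k\le\mu(\alpha_k)$; together these give $\mu_k^pf_k^p\ge(\mu_k/\alpha_k)^p\,x\ge x$, which closes your induction. The stronger auxiliary inequality $f_k-pxf_k'\ge 0$ is also worth adopting: with it the residual factors cleanly as $p\mu_k^{p-1}f_k^p\bigl[f_{k+1}-pxf_{k+1}'\bigr]=(p-1)\bigl(f_k-pxf_k'\bigr)\bigl(\mu_k^pf_k^p-x\bigr)$, which is the $f$-coordinate shadow of the paper's Lemma~\ref{lemma:gk}. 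In short, your skeleton is right and your differentiation of~\eqref{fupdate} is on track, but the inductive hypotheses you carry are too weak; the paper's change of variables to $g_k$ packages the correct, stronger hypotheses in a form that makes the induction close in one line. (Also: once monotonicity is in hand, the separate analysis of the endpoint $x=0$ is redundant.)
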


Experiments suggest that the bound~\eqref{eq:bound0alp} could be improved to $<\alpha$ for $k$ large enough, but this does not affect what follows in any significant way.

We will prove Theorem~\ref{thm:error0} by a series of lemmas.  Let
\[
g_k(x) = \frac{x}{f_k(x^p)}.
\]
Note that $g_0(x) = x$ and
\begin{align}\label{eq:sectorg}
g_{k+1}(x) = \frac{x}{ f_k(x^p) \hat{r}_{1,0}\left( \frac{x^p}{f_k(x^p)^p}, \alpha_k, \pthroot{\;\cdot\;} \right) } = \frac{g_k(x)}{ \hat{r}_{1,0}(g_k(x)^p, \alpha_k,\pthroot{\;\cdot\;}) } = \hat{s}(g_k(x), \alpha_k),
\end{align}
where
\[
\hat{s}(x,\alpha) = \frac{x}{\hat{r}_{1,0}(x^p,\alpha,\pthroot{\;\cdot\;})} = \frac{px}{(p-1)\mu(\alpha) + \mu(\alpha)^{1-p} x^p}.
\]
Also let
\[
H(\alpha) = \hat{s}(\alpha,\alpha) = \frac{p \alpha}{(p-1)\mu(\alpha) + \mu(\alpha)^{1-p} \alpha^p},
\]
so that $\alpha_{k+1} = H(\alpha_k)$.

\begin{lemma} \label{lemma:shat}
For every $\alpha \in (0,1)$ and every $x \in [0,\alpha]$,
\[
0 \le x \hat{s}'(x,\alpha) \le \hat{s}(x,\alpha) \le H(\alpha),
\]
where the prime denotes differentiation with respect to $x$.
\end{lemma}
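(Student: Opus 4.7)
My plan is to work directly from the explicit formula
\begin{equation*}
\hat{s}(x,\alpha) = \frac{px}{(p-1)\mu(\alpha) + \mu(\alpha)^{1-p}x^p}
\end{equation*}
and verify the three inequalities in sequence. A one-line quotient-rule computation shows that
\begin{equation*}
\hat{s}'(x,\alpha) = \frac{p(p-1)\mu(\alpha)^{1-p}\bigl(\mu(\alpha)^p - x^p\bigr)}{\bigl[(p-1)\mu(\alpha) + \mu(\alpha)^{1-p}x^p\bigr]^2},
\end{equation*}
so $\hat{s}'(x,\alpha)$ has the same sign as $\mu(\alpha)^p - x^p$ and $\hat{s}(\cdot,\alpha)$ is increasing on $[0,\mu(\alpha)]$. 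Both the nonnegativity $x\hat{s}'(x,\alpha)\ge 0$ on $[0,\alpha]$ and the upper bound $\hat{s}(x,\alpha)\le H(\alpha)=\hat{s}(\alpha,\alpha)$ then reduce to the single structural fact that $\alpha\le\mu(\alpha)$.

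To prove $\alpha\le\mu(\alpha)$, I would use the explicit expression $\mu(\alpha)^p = \alpha(1-\alpha^{p-1})/[(p-1)(1-\alpha)]$ together with the geometric sum $(1-\alpha^{p-1})/(1-\alpha) = 1+\alpha+\cdots+\alpha^{p-2}$. The desired inequality $\mu(\alpha)^p\ge\alpha^p$ becomes $1+\alpha+\cdots+\alpha^{p-2}\ge (p-1)\alpha^{p-1}$, which holds term-by-term for $\alpha\in(0,1)$ since each $\alpha^i$ with $i\le p-2$ satisfies $\alpha^i\ge\alpha^{p-1}$. Together with the monotonicity established above, this immediately yields the outer two inequalities of the lemma.

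For the middle inequality $x\hat{s}'(x,\alpha)\le\hat{s}(x,\alpha)$, the cleanest route is to recast it as a bound on the logarithmic derivative $\frac{d\log\hat{s}}{d\log x}\le 1$. Differentiating $\log\hat{s}(x,\alpha) = \log(px) - \log\bigl[(p-1)\mu(\alpha) + \mu(\alpha)^{1-p}x^p\bigr]$ yields
\begin{equation*}
\frac{x\hat{s}'(x,\alpha)}{\hat{s}(x,\alpha)} = 1 - \frac{p\mu(\alpha)^{1-p}x^p}{(p-1)\mu(\alpha) + \mu(\alpha)^{1-p}x^p},
\end{equation*}
and the subtracted fraction is manifestly nonnegative, so the bound actually holds for all $x\ge 0$, not just on $[0,\alpha]$. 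There is no real obstacle: the whole lemma rests on the single nontrivial observation $\mu(\alpha)\ge\alpha$, and I would frame the proof around that fact, using the log-derivative identity only to sidestep an otherwise unwieldy direct comparison of two rational expressions.
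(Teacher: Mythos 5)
Your proof is correct and follows essentially the same route as the paper: your log-derivative identity $x\hat{s}'/\hat{s} = 1 - \frac{p\mu^{1-p}x^p}{(p-1)\mu + \mu^{1-p}x^p}$ simplifies (set $u=(x/\mu)^p$) to exactly the factor $w(x) = \frac{(p-1)(1-u)}{(p-1)+u}$ that the paper uses, and both arguments then rest on $[0,\alpha]\subset[0,\mu(\alpha)]$ plus monotonicity. One small improvement in your version: you actually prove the containment $\alpha\le\mu(\alpha)$ via the geometric-sum inequality $1+\alpha+\cdots+\alpha^{p-2}\ge(p-1)\alpha^{p-1}$, whereas the paper asserts $[0,\alpha]\subset[0,\mu(\alpha)]$ without justification.
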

\begin{proof}
A short calculation shows that
\[
x \hat{s}'(x,\alpha) = w(x) \hat{s}(x,\alpha),
\]
where 
\[
w(x) = \frac{ (p-1)\left(1 - \left(\frac{x}{\mu(\alpha)}\right)^p \right) }{ (p-1) + \left(\frac{x}{\mu(\alpha)}\right)^p  }.
\]
Since $0 \le w(x) \le 1$ for every $x \in [0,\mu(\alpha)]$, it follows that 
\[
0 \le x \hat{s}'(x,\alpha) \le \hat{s}(x,\alpha), \quad x \in [0,\mu(\alpha)].
\]
In particular, the above inequalities hold on $[0,\alpha] \subset [0,\mu(\alpha)]$, and $\hat{s}(x,\alpha)$ is nondecreasing on $[0,\alpha]$.  Thus,
\[
\hat{s}(x,\alpha) \le \hat{s}(\alpha,\alpha) = H(\alpha), \quad x \in [0,\alpha].
\]
\end{proof}

Now let $\alpha \in (0,1)$ be fixed.
\begin{lemma} \label{lemma:gk}
For every $x \in [0,\alpha]$ and every $k \ge 0$,
\[
0 \le x g_k'(x) \le g_k(x) \le \alpha_k.
\]
\end{lemma}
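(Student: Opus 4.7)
The plan is to prove the lemma by induction on $k$, using Lemma~\ref{lemma:shat} at each step and the recursive definition $g_{k+1}(x) = \hat{s}(g_k(x),\alpha_k)$ together with $\alpha_{k+1} = H(\alpha_k)$.

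For the base case $k=0$, we have $g_0(x)=x$, so $g_0'(x)=1$, and all three inequalities $0 \le x = x \le \alpha = \alpha_0$ are immediate for $x \in [0,\alpha]$.

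For the inductive step, I would assume the three inequalities $0 \le x g_k'(x) \le g_k(x) \le \alpha_k$ hold for all $x \in [0,\alpha]$ and differentiate the recursion using the chain rule to obtain
\[
x g_{k+1}'(x) = \hat{s}'(g_k(x),\alpha_k)\, x g_k'(x).
\]
The inductive hypothesis places $g_k(x)$ inside $[0,\alpha_k]$, the interval on which Lemma~\ref{lemma:shat} applies (with $\alpha$ replaced by $\alpha_k$). That lemma gives $\hat{s}'(g_k(x),\alpha_k) \ge 0$ and $g_k(x)\hat{s}'(g_k(x),\alpha_k) \le \hat{s}(g_k(x),\alpha_k)$. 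Combining with $x g_k'(x) \ge 0$ yields $x g_{k+1}'(x) \ge 0$, and combining with $x g_k'(x) \le g_k(x)$ gives
\[
x g_{k+1}'(x) \le g_k(x)\,\hat{s}'(g_k(x),\alpha_k) \le \hat{s}(g_k(x),\alpha_k) = g_{k+1}(x).
\]
Finally, the monotonicity of $\hat{s}(\cdot,\alpha_k)$ on $[0,\alpha_k]$ (also from Lemma~\ref{lemma:shat}) together with $g_k(x) \le \alpha_k$ gives
\[
g_{k+1}(x) = \hat{s}(g_k(x),\alpha_k) \le \hat{s}(\alpha_k,\alpha_k) = H(\alpha_k) = \alpha_{k+1},
\]
completing the induction.

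I do not expect any real obstacle; all the analytic work has already been packaged into Lemma~\ref{lemma:shat}. The only subtlety is to verify at the outset that $g_k(x) \in [0,\alpha_k]$, the domain of applicability of Lemma~\ref{lemma:shat}, so that the chain rule estimate can be applied. This is exactly the third inequality in the inductive hypothesis, so it propagates through the induction automatically.
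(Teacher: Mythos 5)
Your proof is correct and takes essentially the same route as the paper: the same induction, the same chain-rule identity $x g_{k+1}'(x)=\hat{s}'(g_k(x),\alpha_k)\,x g_k'(x)$, and the same invocations of Lemma~\ref{lemma:shat}, with the only cosmetic difference being that you re-derive $\hat{s}(g_k(x),\alpha_k)\le H(\alpha_k)$ via monotonicity rather than reading it directly off the bound $\hat{s}(x,\alpha)\le H(\alpha)$ stated in Lemma~\ref{lemma:shat}.
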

\begin{proof}
Since $g_0(x)=x$ and $\alpha_0=\alpha$, the above inequalities hold when $k=0$.  Assume that they hold for some $k \ge 0$.
Observe that
\[
x g_{k+1}'(x) = x g_k'(x) \hat{s}'(g_k(x), \alpha_k) .
\]
Since $g_k(x) \in [0,\alpha_k]$ for $x \in [0,\alpha]$, Lemma~\ref{lemma:shat} implies that $\hat{s}'(g_k(x), \alpha_k) \ge 0$.  It follows from this and our inductive hypothesis that $xg_{k+1}'(x) \ge 0$ for $x \in [0,\alpha]$.  In addition, since $x g_k'(x) \le g_k(x)$ and $g_k(x) \hat{s}'(g_k(x),\alpha_k) \le \hat{s}(g_k(x),\alpha_k)$,
\[
x g_{k+1}'(x) \le \hat{s}(g_k(x),\alpha_k) = g_{k+1}(x).
\]
Finally, since $\hat{s}(g_k(x),\alpha_k) \le H(\alpha_k) = \alpha_{k+1}$, it follows that $g_{k+1}(x) \le \alpha_{k+1}$.
\end{proof}

\begin{lemma}
For every $x \in [0,\alpha^p]$ and every $k \ge 0$,
\[
0 < \widetilde{f}_k(x) \le \alpha(1+\varepsilon_k), \quad \varepsilon_k = \frac{1-\alpha_k}{1+\alpha_k}.
\]
\end{lemma}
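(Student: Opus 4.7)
The plan is to convert the bound on $\widetilde{f}_k$ into a bound on $g_k$ (for which we already have good control via Lemma \ref{lemma:gk}) through the relation $f_k(x^p) = x/g_k(x)$, valid for $x > 0$. Specifically, for $y \in (0,\alpha^p]$ writing $x = y^{1/p} \in (0,\alpha]$, we have
\[
\widetilde{f}_k(y) = \frac{2\alpha_k}{1+\alpha_k} f_k(y) = \frac{2\alpha_k}{1+\alpha_k}\cdot \frac{x}{g_k(x)},
\]
so since $\alpha(1+\varepsilon_k) = \frac{2\alpha}{1+\alpha_k}$, the desired inequality is equivalent to $\frac{g_k(x)}{x} \ge \frac{\alpha_k}{\alpha}$ for $x \in (0,\alpha]$.

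To establish this, I would first observe that Lemma \ref{lemma:gk} gives $x g_k'(x) \le g_k(x)$ on $[0,\alpha]$, which rearranges to
\[
\frac{d}{dx}\log\!\left(\frac{g_k(x)}{x}\right) = \frac{g_k'(x)}{g_k(x)} - \frac{1}{x} \le 0
\]
on $(0,\alpha]$, so $x \mapsto g_k(x)/x$ is nonincreasing on that interval. Next I would verify by induction that $g_k(\alpha) = \alpha_k$: for $k=0$ this is $g_0(\alpha) = \alpha = \alpha_0$, and if it holds for $k$, then $g_{k+1}(\alpha) = \hat{s}(g_k(\alpha),\alpha_k) = \hat{s}(\alpha_k,\alpha_k) = H(\alpha_k) = \alpha_{k+1}$ by \eqref{eq:sectorg} and the definition of $H$. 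Combining the two gives $g_k(x)/x \ge g_k(\alpha)/\alpha = \alpha_k/\alpha$ on $(0,\alpha]$, which yields the desired upper bound on $\widetilde{f}_k(y)$ for $y \in (0,\alpha^p]$.

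It then remains to handle the endpoint $y = 0$ and the strict positivity claim. Since $f_k$ is a rational function with no pole at the origin (as seen by induction from $f_0 \equiv 1$ and the explicit recursion \eqref{fupdate}, which shows $f_{k+1}(0) = \frac{p-1}{p}\mu(\alpha_k) f_k(0) > 0$), the bound $\widetilde{f}_k(y) \le \frac{2\alpha}{1+\alpha_k}$ extends by continuity to $y = 0$. Positivity on $(0,\alpha^p]$ is automatic from $f_k(y) = x/g_k(x)$ with $g_k(x) \ge \alpha_k x/\alpha > 0$, and positivity at $y=0$ follows from the same inductive computation of $f_k(0)$.

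The main obstacle is recognizing the right monotonicity statement to extract from Lemma \ref{lemma:gk}. The inequality $x g_k'(x) \le g_k(x)$ looks at first like a mere growth condition, but its content is exactly that $g_k(x)/x$ is nonincreasing, which is the pivot that lets us transfer the known value $g_k(\alpha) = \alpha_k$ into a lower bound on $g_k$ throughout $(0,\alpha]$ and hence into an upper bound on $f_k$. Everything else is bookkeeping: the scaling factor $\frac{2\alpha_k}{1+\alpha_k}$ in the definition of $\widetilde{f}_k$ cancels the $\alpha_k$ in the denominator to produce precisely $\alpha(1+\varepsilon_k)$.
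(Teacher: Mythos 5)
Your proof is correct, and it differs from the paper's in one substantive way. Both arguments share the same monotonicity core: the inequality $x g_k'(x) \le g_k(x)$ from Lemma~\ref{lemma:gk} is exactly the statement that $g_k(x)/x$ is nonincreasing, which via $f_k(x^p)=x/g_k(x)$ is the same as saying $f_k$ is nondecreasing on $[0,\alpha^p]$ --- the paper phrases it in the latter form, you in the former. Where you genuinely diverge is at the endpoint: the paper bounds $\widetilde{f}_k(\alpha^p)$ by invoking the equioscillation property~\eqref{eq:error1} (i.e., Theorem~2 of the cited reference) at $x=\alpha^p$, whereas you compute the endpoint value exactly by the inductive identity $g_k(\alpha)=\alpha_k$, which follows in one line from $g_{k+1}=\hat{s}(g_k,\alpha_k)$ and $\alpha_{k+1}=H(\alpha_k)=\hat{s}(\alpha_k,\alpha_k)$. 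Your route is more self-contained (it does not lean on the minimax theory behind~\eqref{eq:error1}) and in fact shows the bound is attained with equality at $x=\alpha^p$, i.e., $\widetilde{f}_k(\alpha^p)=\frac{2\alpha}{1+\alpha_k}=\alpha(1+\varepsilon_k)$; the paper's route is shorter given that~\eqref{eq:error1} is already quoted. One cosmetic point: your logarithmic-derivative step presupposes $g_k(x)>0$ on $(0,\alpha]$, which Lemma~\ref{lemma:gk} does not quite give (it only gives $g_k\ge 0$); either differentiate the quotient $g_k(x)/x$ directly, or note first that positivity of $g_k$ on $(0,\alpha]$ follows by induction from the explicit form of $\hat{s}$. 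This is a trivial repair, not a gap.
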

\begin{proof}
We first note that $f_k$ is positive and nondecreasing on $[0,\alpha^p]$.  Indeed, differentiating the relation
\[
f_k(x^p) = \frac{x}{g_k(x)}
\]
gives
\[
p x^{p-1} f_k'(x^p) = \frac{g_k(x) - x g_k'(x)}{g_k(x)^2},
\]
so Lemma~\ref{lemma:gk} implies that $f_k'(x^p) \ge 0$ for every $x \in [0,\alpha]$.  
Evaluating the recursion~(\ref{fupdate}) at $x=0$ gives
\[
f_{k+1}(0) = f_k(0) \left( \frac{p-1}{p} \right) \mu(\alpha_k), \quad f_0(0) = 1,
\]
so $f_k(0)>0$ for every $k$.
Since $\widetilde{f}_k(x)$ is a positive multiple of $f_k(x)$, it follows that $0 < \widetilde{f}_k(x) \le\widetilde{f}_k(\alpha^p)$ for every $x \in [0,\alpha^p]$.  Finally, taking $x=\alpha^p$ in~(\ref{eq:error1}) gives $\widetilde{f}_k(\alpha^p) \le \alpha(1+\varepsilon_k)$.
\end{proof}

By the lemma above,
\[
|\widetilde{f}_k(x) - x^{1/p}| \le \max\{ |\widetilde{f}_k(x)|, |x^{1/p}| \} \le \max\{ \alpha(1+\varepsilon_k),  \alpha \} = \alpha(1+\varepsilon_k) \le 2 \alpha, \quad x \in [0,\alpha^p],
\]
so
\[
\max_{x \in [0,\alpha^p]} |\widetilde{f}_k(x) - x^{1/p}| \le 2\alpha.
\]
This completes the proof of Theorem~\ref{thm:error0}.

An estimate for the absolute error on $[0,1]$ is now immediate: Combining the above theorem,~(\ref{eq:error1}), and the fact that $x^{1/p} \le 1$ for $x \in [0,1]$, we see that
\begin{equation} \label{combinedbound}
\max_{x \in [0,1]} | \widetilde{f}_k(x) - x^{1/p}  | \le \max\left\{ 2\alpha, \frac{1-\alpha_k}{1+\alpha_k} \right\}.
\end{equation}

\subsection{Sector function approximation}
We note that the function $g_k$ in~\eqref{eq:sectorg}
approximates the $p$-sector function $\mbox{sect}_p(z)=z/(z^p)^{1/p}$ (this observation appeared in~\cite[Sec.~4]{gawlik2018pth}), and 
$g_k$ is a \emph{pure} composite rational function of the form 
$g_k(z) = r_k(r_{k-1}(\cdots r_2(r_1(z))))$. 
In fact it is $(k,1,p)$-composite, and an inductive argument shows that it has type $(p^k-p+1,p^k)$.
In the $p=2$ case, this reduces to Zolotarev's best rational 
approximant to the sign function of type $(2^k-1,2^k)$.  That is, as in the square root approximation, the minimax rational approximant is contained in the class of (here purely) composite rational functions. 

Below we derive estimates for the maximum weighted error $|z(g_k(z) - \operatorname{sect}_p(z))|$ on the sets $S_p,S_{p,\alpha} \subset \mathbb{C}$ defined in~(\ref{Sp}) and~\eqref{Sptil}.  As before, it will be convenient to work not with $g_k(z)$ but with the rescaled function
\[
\widetilde{g}_k(z) = \frac{2}{1+\alpha_k} g_k(z) =  \frac{4\alpha_k}{(1+\alpha_k)^2} \frac{z}{\widetilde{f}_k(z^p)}.
\]
As shown in \cite[Sec.~4]{gawlik2018pth},  the relative error $\frac{\widetilde{g}_k(z)-\operatorname{sect}_p(z)}{\operatorname{sect}_p(z)}$ is real-valued and equioscillates on each line segment $\{z\in\mathbb{C} \mid e^{-2\pi ij/p}z\in[\alpha,1] \}$, $j=0,1,\ldots,p-1$. 
Note that here the relative and absolute errors are the same in modulus. The asymptotic convergence rate on $S_{p,\alpha}$ was analyzed in~\cite{gawlik2018pth}. Here we quantify the non-asymptotic convergence on $S_p$.
\begin{lemma}
For every $k \ge 0$,
\begin{equation} \label{eq:error1sector}
\max_{z \in S_p} |z (\widetilde{g}_k(z) - \operatorname{sect}_p(z))| \le \max\left\{ \alpha, \frac{1-\alpha_k}{1+\alpha_k} \right\},
\end{equation}
and 
\begin{equation} \label{eq:error1sectorhat}
\max_{z \in S_{p,\alpha}} |\widetilde{g}_k(z) - \operatorname{sect}_p(z)| \le \frac{1-\alpha_k}{1+\alpha_k} .
\end{equation}
\end{lemma}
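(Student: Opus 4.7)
The plan is to reduce the statement to a one-variable inequality and then split into two ranges for $|z|$.  First I would parametrize $S_p$ by $z = xe^{2\pi ij/p}$ with $x \in [0,1]$ and $j \in \{1,\ldots,p\}$, noting that $(z^p)^{1/p} = x$, so $\operatorname{sect}_p(z) = e^{2\pi ij/p}$ and $\widetilde{f}_k(z^p) = \widetilde{f}_k(x^p)$ is real.  A direct substitution, together with the identity $\frac{4\alpha_k}{(1+\alpha_k)^2} = 1 - \varepsilon_k^2$, yields
\[
\widetilde{g}_k(z) - \operatorname{sect}_p(z) \;=\; e^{2\pi ij/p}\cdot\frac{(1-\varepsilon_k^2)\,x - \widetilde{f}_k(x^p)}{\widetilde{f}_k(x^p)}.
\]
Both inequalities then become statements about this real-valued ratio (and its multiple by $x$).

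For the absolute bound~\eqref{eq:error1sectorhat} on $S_{p,\alpha}$, I have $x \in [\alpha,1]$, and~\eqref{eq:error1} gives $\widetilde{f}_k(x^p) \in [(1-\varepsilon_k)x,(1+\varepsilon_k)x]$.  Treating $\widetilde{f}_k(x^p)$ as a free variable $v$ in this interval and examining the two subintervals where the numerator $(1-\varepsilon_k^2)x - v$ changes sign, I would verify that the extremum in each subinterval is attained at the corresponding endpoint $v = (1\mp\varepsilon_k)x$ and equals exactly $\pm\varepsilon_k$.  This proves~\eqref{eq:error1sectorhat}; multiplying by $|z| = x \le 1$ then handles the $S_{p,\alpha}$ portion of~\eqref{eq:error1sector} as well.

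The remaining case for~\eqref{eq:error1sector} is $z \in S_p$ with $x \in [0,\alpha]$, where~\eqref{eq:error1} is unavailable.  Here I would rewrite
\[
|z(\widetilde{g}_k(z)-\operatorname{sect}_p(z))| \;=\; \left|\,\frac{(1-\varepsilon_k^2)x^2}{\widetilde{f}_k(x^p)} - x\,\right|,
\]
and control the denominator via Lemma~\ref{lemma:gk}, which states $g_k(x) = x/f_k(x^p) \le \alpha_k$ on $[0,\alpha]$.  Via the scaling $\widetilde{f}_k = \frac{2\alpha_k}{1+\alpha_k}f_k$, this translates into the explicit lower bound $\widetilde{f}_k(x^p) \ge \frac{2x}{1+\alpha_k}$.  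Plugging in and simplifying using $1-\varepsilon_k = \frac{2\alpha_k}{1+\alpha_k}$ and $1+\varepsilon_k = \frac{2}{1+\alpha_k}$ gives $\frac{(1-\varepsilon_k^2)x^2}{\widetilde{f}_k(x^p)} \le \frac{2\alpha_k}{1+\alpha_k}\,x \le x$.  Since the two nonnegative quantities being subtracted both lie in $[0,x]$, the absolute value of their difference is at most $x \le \alpha$, completing the proof.

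The main obstacle is the apparent singularity of $\widetilde{g}_k(z)$ as $x \to 0$: a priori the denominator $\widetilde{f}_k(x^p)$ might vanish faster than $x$, and the relative-error estimate~\eqref{eq:error1} gives no control on $[0,\alpha^p]$.  The key realization is that Lemma~\ref{lemma:gk} provides exactly the missing lower bound on the ratio $\widetilde{f}_k(x^p)/x$, in the simple closed form $\frac{2}{1+\alpha_k}$, which is precisely what is needed to tame this case and bound the weighted error by $\alpha$.
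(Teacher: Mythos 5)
Your proof is correct and follows essentially the same route as the paper: parametrize $S_p$ along each ray, split at modulus $\alpha$, use the relative-error equioscillation bound~\eqref{eq:error1} on the outer piece, and use Lemma~\ref{lemma:gk} to bound $g_k$ (equivalently your lower bound on $\widetilde f_k(x^p)/x$) on the inner piece. The only differences are cosmetic (you parametrize by $|z|=x$ rather than $|z|=x^{1/p}$, and you phrase the $[\alpha,1]$ case as an optimization over a free variable $v$ instead of directly quoting the interval containment as in~\eqref{eq:tildeuse}), so this matches the paper's argument.
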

\begin{proof}
Let $z = x^{1/p} e^{2\pi i j/p}$ with $x \in [0,1]$ and $j \in \{1,\dots,p\}$.  Since $\widetilde{g}_k(z) = e^{2\pi i j/p} \widetilde{g}_k(x^{1/p})$ and $\operatorname{sect}_p(z) = e^{2\pi i j/p}$, we have
\[
|z (\widetilde{g}_k(z) - \operatorname{sect}_p(z))| = |x^{1/p} (\widetilde{g}_k(x^{1/p}) - 1)|.
\]
If $x \in [0,\alpha^p]$, then Lemma~\ref{lemma:gk} implies that $0 \le \widetilde{g}_k(x^{1/p}) \le \frac{2\alpha_k}{1+\alpha_k} < 1$, so
\[
|x^{1/p} (\widetilde{g}_k(x^{1/p}) - 1)| \le x^{1/p} \le \alpha, \quad x \in [0,\alpha^p].
\]
On the other hand, if $x \in [\alpha^p,1]$, then
\begin{equation}  \label{eq:removexscale}
|x^{1/p} (\widetilde{g}_k(x^{1/p}) - 1)| \le |\widetilde{g}_k(x^{1/p}) - 1| = \left| \frac{4\alpha_k}{(1+\alpha_k)^2} \frac{x^{1/p}}{\widetilde{f}_k(x)} - 1\right|.  
\end{equation}
By~(\ref{eq:error1}),
\[
\frac{\widetilde{f}_k(x)}{x^{1/p}} \in \left[ 1 - \left( \frac{1-\alpha_k}{1+\alpha_k} \right), 1 + \left( \frac{1-\alpha_k}{1+\alpha_k} \right) \right] = \left[ \frac{2\alpha_k}{1+\alpha_k}, \frac{2}{1+\alpha_k} \right] , \quad x \in [\alpha^p,1],
\]
so
\[
\frac{x^{1/p}}{\widetilde{f}_k(x)} \in \left[ \frac{1+\alpha_k}{2}, \frac{1+\alpha_k}{2\alpha_k} \right] , \quad x \in [\alpha^p,1],
\]
and hence
\begin{equation}
  \label{eq:tildeuse}
\frac{4\alpha_k}{(1+\alpha_k)^2} \frac{x^{1/p}}{\widetilde{f}_k(x)} - 1 \in \left[ -\frac{1-\alpha_k}{1+\alpha_k}, \frac{1-\alpha_k}{1+\alpha_k} \right] , \quad x \in [\alpha^p,1].  
\end{equation}
It follows that 
\[
|x^{1/p} (\widetilde{g}_k(x^{1/p}) - 1)| \le \frac{1-\alpha_k}{1+\alpha_k}, \quad x \in [\alpha^p,1].
\]
For~\eqref{eq:error1sectorhat}, we simply start from the second expression in~\eqref{eq:removexscale} and use~\eqref{eq:tildeuse}.
\end{proof}

\section{Proof of Theorems~\ref{thm:main} and~\ref{thm:sector}} \label{sec:convergence}
To examine the convergence of the recursion~\eqref{fupdate}-\eqref{alphaupdate} on $[0,1]$, we first ask the question: given $\epsilon>0$, what values of $k$ and $\alpha$ are needed to get an error $\epsilon$?  In view of~(\ref{combinedbound}), we must choose $\alpha \le \epsilon/2$ and $k$ large enough so that $\frac{1-\alpha_k}{1+\alpha_k} \le \epsilon$.

To determine $k$, we select a constant $\alpha^* \in (1/e,1)$ (depending on $p$) and split the convergence of $\alpha_k\rightarrow 1$ into three stages: 
\begin{enumerate}
\item Find $k_1$ such that $\alpha_{k_1} \ge \frac{1}{e}$. 
\item Find $k_2$ such that $\alpha_{k_1+k_2} \ge \alpha^*$. 
\item Find $k_3$ such that $\frac{1-\alpha_{k_1+k_2+k_3}}{1+\alpha_{k_1+k_2+k_3}} \le \epsilon$. 
\end{enumerate}
Clearly, the second stage is independent of $\epsilon$ and $\alpha_0$, so $k_2$ is a constant (depending on $p$). 

Our choice of $\alpha^*$ is described in the following lemma.

\begin{lemma} \label{lemma:alphastar}
There exists a constant $\alpha^* \in (0,1)$ depending on $p$ such that  
\[
\frac{1-H(\alpha)}{1+H(\alpha)} \le \frac{p}{2}  \left(\frac{1-\alpha}{1+\alpha}\right)^2
\]
for every $\alpha \in [\alpha^*,1]$.
\end{lemma}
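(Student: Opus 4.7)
The plan is to verify the inequality by Taylor-expanding both sides around $\alpha=1$, show that both vanish to second order there, and observe that the ratio of leading coefficients is strictly less than $1$. By continuity this will produce the required $\alpha^\ast$. Throughout I set $\delta=1-\alpha$ and work to order $\delta^2$.

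First I would expand $\mu(\alpha)$. Writing $\mu(\alpha)^p=\frac{\alpha}{p-1}\sum_{j=0}^{p-2}\alpha^j$ and summing the binomial expansions of $(1-\delta)^j$ (using $\sum_{j=0}^{p-2}j=\binom{p-1}{2}$ and $\sum_{j=0}^{p-2}\binom{j}{2}=\binom{p-1}{3}$) gives $\mu(\alpha)^p=1-\tfrac{p}{2}\delta+\tfrac{p(p-2)}{6}\delta^2+O(\delta^3)$, and then taking the $p$-th root yields $\mu(\alpha)=1-\tfrac{\delta}{2}+O(\delta^2)$. Next I would expand the denominator $D(\alpha):=(p-1)\mu(\alpha)+\mu(\alpha)^{1-p}\alpha^p$ of $H(\alpha)$. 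The key observation is a cancellation at first order: the $\delta$-coefficient of $(p-1)\mu(\alpha)$ is $-\tfrac{p-1}{2}$, while that of $\mu(\alpha)^{1-p}\alpha^p$ works out to $\tfrac{p-1}{2}-p=-\tfrac{p+1}{2}$, summing to $-p$. Thus $D(\alpha)$ agrees with $p\alpha$ through first order, and a careful bookkeeping of second-order terms (the awkward part of the calculation) gives
\[
D(\alpha)=p\alpha+\frac{p(p-1)}{8}\delta^2+O(\delta^3).
\]
From $1-H(\alpha)=\frac{D(\alpha)-p\alpha}{D(\alpha)}$ one then reads off
\[
1-H(\alpha)=\frac{(p-1)(1-\alpha)^2}{8\alpha}+O\!\bigl((1-\alpha)^3\bigr),\qquad \alpha\to 1^-.
\]

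To conclude, since $\bigl(\frac{1-\alpha}{1+\alpha}\bigr)^2=\frac{(1-\alpha)^2}{4}\bigl(1+O(1-\alpha)\bigr)$ and $1+H(\alpha)\to 2$, the left- and right-hand sides of the claimed inequality satisfy
\[
\frac{1-H(\alpha)}{1+H(\alpha)}\sim\frac{(p-1)(1-\alpha)^2}{16},\qquad \frac{p}{2}\left(\frac{1-\alpha}{1+\alpha}\right)^2\sim\frac{p(1-\alpha)^2}{8},
\]
so their ratio tends to $\tfrac{p-1}{2p}<1$ as $\alpha\to 1^-$. Since both sides are continuous on $(0,1)$ and positive away from $\alpha=1$, the strict limiting inequality lifts to a closed neighborhood $[\alpha^\ast,1]$ of $1$, which yields the desired $\alpha^\ast\in(0,1)$.

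The main obstacle is purely computational: tracking the $O(\delta^2)$ contributions of $\mu(\alpha)^{1-p}\alpha^p$ correctly, because this requires expanding $\mu$ itself through order $\delta^2$ (not just $\delta$) in order to obtain the second-order coefficient of $D(\alpha)$, and the cancellation of the $O(\delta)$ terms in $D(\alpha)-p\alpha$ must be verified exactly. No conceptual obstacle arises; the estimate is essentially a reflection of the fact that the map $\alpha\mapsto H(\alpha)$ is a disguised Newton step for $y^p=\alpha^p$ started from $\mu(\alpha)$, and hence converges to its fixed point $\alpha=1$ quadratically.
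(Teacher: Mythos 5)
Your proof is correct, and it establishes the same asymptotic fact the paper uses, namely that
\[
\frac{1-H(\alpha)}{1+H(\alpha)} \Big/ \left(\frac{1-\alpha}{1+\alpha}\right)^2 \;\longrightarrow\; \frac{p-1}{4} \qquad \text{as } \alpha \uparrow 1,
\]
after which the concluding step (the strict limiting inequality $\frac{p-1}{4}<\frac{p}{2}$ persists on a closed neighborhood $[\alpha^*,1]$) is identical to the paper's. Where you differ is in how the limit is obtained: the paper invokes \cite[Theorem~2]{gawlik2018pth}, which gives the constant $C(m,\ell,p)$ governing the general $(m,\ell)$ iteration, and simply evaluates $C(1,0,p)=\frac{p-1}{4}$; you instead re-derive the $(m,\ell)=(1,0)$ special case from scratch by Taylor-expanding $\mu(\alpha)$, the denominator $D(\alpha)=(p-1)\mu(\alpha)+\mu(\alpha)^{1-p}\alpha^p$, and hence $H(\alpha)$ around $\alpha=1$. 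I checked the bookkeeping: the expansion $\mu(\alpha)^p=1-\tfrac{p}{2}\delta+\tfrac{p(p-2)}{6}\delta^2+O(\delta^3)$ is right, the first-order terms in $D(\alpha)-p\alpha$ cancel, the surviving second-order coefficient is $\tfrac{p(p-1)}{8}$, and the limit $\tfrac{p-1}{2p}$ of the ratio of the two sides (equivalently $\tfrac{p-1}{4}$ for the normalized ratio) follows. Your route has the advantage of being self-contained and elementary—no appeal to an external asymptotic result—at the cost of the second-order bookkeeping you flag; the paper's route is shorter and situates the constant $\tfrac{p-1}{4}$ inside the general formula for $C(m,\ell,p)$, which is useful context for the remark later in Section~\ref{sec:composite} about larger $(m,\ell)$. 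Your closing observation that $\alpha/H(\alpha)$ is a Newton step for $y^p=\alpha^p$ started at $\mu(\alpha)$ is a nice conceptual explanation of why the quadratic rate should be expected.
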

\begin{proof}
It is proven in~\cite[Theorem 2]{gawlik2018pth} that the iteration~(\ref{eq:recurse2}) generates an increasing sequence $\{\alpha_k\}_{k=0}^\infty$ satisfying $\lim_{k \rightarrow \infty} \alpha_k = 1$ and
\[
\frac{1-\alpha_{k+1}}{1+\alpha_{k+1}} = C(m,\ell,p) \left( \frac{1-\alpha_k}{1+\alpha_k} \right)^{m+\ell+1} + o \left( \left( \frac{1-\alpha_k}{1+\alpha_k} \right)^{m+\ell+1} \right),
\]
where
\[
C(m,\ell,p) = \frac{p^{m+\ell+1} m! \ell! (1/p)_{\ell+1} (1-1/p)_m }{ 2^{m+\ell} (m+\ell+1)!(m+\ell)! }
\]
and $(\beta)_m$ denotes the rising factorial (the Pochhammer symbol): $(\beta)_m = \beta(\beta+1)(\beta+2)\cdots(\beta+m-1)$.  Since $C(1,0,p) = \frac{p-1}{4}$, this implies that the iteration~(\ref{eq:recurse2}) with $(m,\ell)=(1,0)$ (i.e., the iteration~(\ref{alphaupdate})) generates $\{\alpha_k\}_{k=0}^\infty$ satisfying
\[
\frac{1-\alpha_{k+1}}{1+\alpha_{k+1}} = \left( \frac{p-1}{4} \right) \left( \frac{1-\alpha_k}{1+\alpha_k} \right)^2 + o \left( \left( \frac{1-\alpha_k}{1+\alpha_k} \right)^2 \right).
\]
In other words,
\[
\left. \frac{1-H(\alpha)}{1+H(\alpha)} \middle/ \left(\frac{1-\alpha}{1+\alpha}\right)^2 \right.  \rightarrow \frac{p-1}{4}, \text{ as } \alpha \uparrow 1.
\]
It follows that the above ratio is bounded by $\frac{p}{2}$ for $\alpha$ close enough to $1$.
\end{proof}
Without loss of generality, we assume 
\begin{equation} \label{eq:wlog}
\alpha^*  > \max\left\{\frac{1}{e},\frac{p-2}{p+2}\right\}
\end{equation}
in what follows.

\paragraph{Stage 1}
We will now determine $k_1$ such that $\alpha_{k_1} \ge \frac{1}{e}$.  We begin with a lemma.

\begin{lemma} \label{lemma:lowerbound}
For every $\alpha \in (0,1)$, 
\[
H(\alpha) > \alpha^{1-1/p}.
\]
\end{lemma}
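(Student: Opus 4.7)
The plan is to establish $H(\alpha) > \alpha^{1-1/p}$ by first simplifying the closed-form expression for $H$ using the defining identity for $\mu(\alpha)$, and then reducing the inequality to a simple comparison of arithmetic means of geometric progressions.

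As a first step, I would simplify the denominator of $H(\alpha)$. Factoring out $\mu^{1-p}$ and substituting $(p-1)\mu^p = \alpha(1-\alpha^{p-1})/(1-\alpha)$ from~\eqref{eq:m1ell0}, the two terms combine telescopically to give
\[
(p-1)\mu^p + \alpha^p = \frac{\alpha(1-\alpha^{p-1}) + \alpha^p(1-\alpha)}{1-\alpha} = \frac{\alpha(1-\alpha^p)}{1-\alpha}.
\]
After cancellation, the formula for $H$ collapses to the compact form
\[
H(\alpha) = \frac{p(1-\alpha)\mu(\alpha)^{p-1}}{1-\alpha^p}.
\]
Raising the desired inequality $H(\alpha) > \alpha^{(p-1)/p}$ to the $p$-th power, substituting $\mu^{p(p-1)} = (\mu^p)^{p-1}$ using the explicit formula for $\mu^p$, and invoking the standard factorization $1-\alpha^k = (1-\alpha) A_k$ with $A_k := \sum_{j=0}^{k-1}\alpha^j$, the claim reduces after routine cancellation to
\[
\frac{A_p^p}{A_{p-1}^{p-1}} < \frac{p^p}{(p-1)^{p-1}}.
\]

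Setting $u := A_p/p$ and $v := A_{p-1}/(p-1)$, this is simply $u^p < v^{p-1}$. I would then observe that $u$ and $v$ are the arithmetic means of the geometric sequences $1,\alpha,\dots,\alpha^{p-1}$ and $1,\alpha,\dots,\alpha^{p-2}$, respectively. Since $\alpha\in(0,1)$, both averages lie in $(0,1)$; moreover, dropping the smallest element $\alpha^{p-1}$ from the first average strictly increases it, so $v > u$. Combining these two facts yields $u^p = u\cdot u^{p-1} < 1\cdot v^{p-1} = v^{p-1}$, as required.

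The main obstacle is spotting the telescoping identity that collapses the denominator of $H(\alpha)$ into the compact form above; once this is in hand and the inequality is recast in terms of the partial geometric sums $A_{p-1}$ and $A_p$, the rest is an elementary consequence of the monotonicity of arithmetic means of geometric sequences.
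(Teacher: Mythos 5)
Your proof is correct and follows essentially the same route as the paper's: both simplify $H(\alpha)$ via the telescoping denominator identity, reduce the inequality to a comparison of the two normalized geometric sums $A_p/p$ and $A_{p-1}/(p-1)$ (your $u$ and $v$, the paper's $h$ and $g$), and then conclude from $u<v<1$. The only cosmetic difference is that you raise to the $p$th power and argue $u^p < u^{p-1} < v^{p-1}$, whereas the paper stays with fractional exponents and argues $g^{1-1/p}/h > g/h > 1$; these are the same inequality.
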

\begin{proof}
We have
\begin{align*}
H(\alpha) 
&= \frac{p \alpha \mu(\alpha)^{p-1} }{ (p-1)\mu(\alpha)^p + \alpha^p } 
= \frac{p \alpha \mu(\alpha)^{p-1} }{ \frac{\alpha-\alpha^p}{1-\alpha} + \alpha^p } 
= \frac{p \alpha \mu(\alpha)^{p-1} (1-\alpha) }{ \alpha-\alpha^{p+1} } \\
&= \frac{p \mu(\alpha)^{p-1} (1-\alpha) }{ 1-\alpha^p } 
= \alpha^{1-1/p} \frac{ g(\alpha)^{1-1/p} }{ h(\alpha) },
\end{align*}
where
\begin{align*}
g(\alpha) &= \frac{1-\alpha^{p-1}}{(p-1)(1-\alpha)} = \frac{1}{p-1} \sum_{j=0}^{p-2} \alpha^j, \\
h(\alpha) &= \frac{1-\alpha^p}{p(1-\alpha)} = \frac{1}{p} \sum_{j=0}^{p-1} \alpha^j.
\end{align*}
Since $0 < h(\alpha) < g(\alpha) < 1$ for every $\alpha \in (0,1)$, it follows that
\[
\frac{g(\alpha)^{1-1/p}}{h(\alpha)} > \frac{g(\alpha)}{h(\alpha)} > 1.
\]
\end{proof}

Lemma~\ref{lemma:lowerbound} implies
\begin{equation}  \label{eq:alphait0new}
\alpha_{k+1} \ge \alpha_k^{1-1/p}
\end{equation}
for every $k$, so 
\[
\alpha_k \ge \alpha^{(1-1/p)^k}.
\]
Thus, we will have $\alpha_{k_1} \ge 1/e$ if $ \alpha^{(1-1/p)^{k_1}} \ge 1/e$, which means
\begin{equation}  \label{eq:k1def}
k_1 \geq  \frac{\log \log \frac{1}{\alpha}}{\log (\frac{p}{p-1})}.
\end{equation}

\paragraph{Stage 2}
As mentioned previously, $k_2$ is a constant independent of $\alpha$ and $\epsilon$. 

\paragraph{Stage 3}
We now determine $k_3$ such that $\frac{1-\alpha_{k_1+k_2+k_3}}{1+\alpha_{k_1+k_2+k_3}} \le \epsilon$.  By Lemma~\ref{lemma:alphastar},
\[
\frac{1 - \alpha_{k+1}}{1+\alpha_{k+1}}
= \frac{1-H(\alpha_k)}{1+H(\alpha_k)} 
\le \frac{p}{2} \left( \frac{1-\alpha_k}{1+\alpha_k} \right)^2 \\
\]
for $k \ge k_1+k_2$.
In terms of $\delta_k := \frac{p}{2}\left( \frac{1-\alpha_k}{1+\alpha_k} \right)$, we have $\delta_{k+1} \le \delta_k^2$, so
\[
\delta_{k_1+k_2+k} \le \delta_{k_1+k_2}^{2^k} \le \left( \frac{p}{2} \left( \frac{1-\alpha^*}{1+\alpha^*} \right) \right)^{2^k}.
\]
By~(\ref{eq:wlog}), $\frac{p}{2} \left( \frac{1-\alpha^*}{1+\alpha^*} \right) < 1$, so we will have $\frac{1-\alpha_{k_1+k_2+k_3}}{1+\alpha_{k_1+k_2+k_3}} \le \epsilon$ if 
\[
\left( \frac{p}{2} \left( \frac{1-\alpha^*}{1+\alpha^*} \right) \right)^{2^{k_3}} \le \frac{p}{2} \epsilon,
\]
i.e.
\[
k_3 \ge \frac{ \log\log\frac{2}{\epsilon p}  - \log\log \frac{2}{p}\left( \frac{1+\alpha^*}{1-\alpha^*} \right) }{ \log 2 }.
\]

Finally, by taking $\alpha=\epsilon/2$ we ensure that the error on $[0,\alpha^p]$ is bounded by $\epsilon$ (recall~\eqref{combinedbound}), so the error on $[0,1]$ is bounded by $\epsilon$. 

We illustrate the process in Figure~\ref{fig:err}, 
where we fix integers\footnote{$p=31$ is a somewhat arbitrary prime number, chosen in view of the number of days per month.} $p$ and $k$, and numerically find the value of $\alpha\in (0,1)$ and accordingly $\epsilon=\frac{1-\alpha_k}{1+\alpha_k}=2\alpha$ such that with the $(k,p,p-1)$-composite rational approximant $\widetilde f_k$ the error is $\max_{x\in[\alpha^p,1]}|x^{1/p}-\widetilde f_k(x)|\leq\epsilon$, achieved at $x=1$, and the error on $[0,\alpha^p]$ is bounded by $\epsilon$. 
Observe that the maximum errors on $[0,\alpha^p]$ and $[\alpha^p,1]$ are 
 not equal but of the same order, suggesting the near optimality of our composite rational approximants.
\begin{figure}[htbp]
  \centering
  \begin{minipage}[t]{0.495\hsize}
      \includegraphics[width=0.9\textwidth]{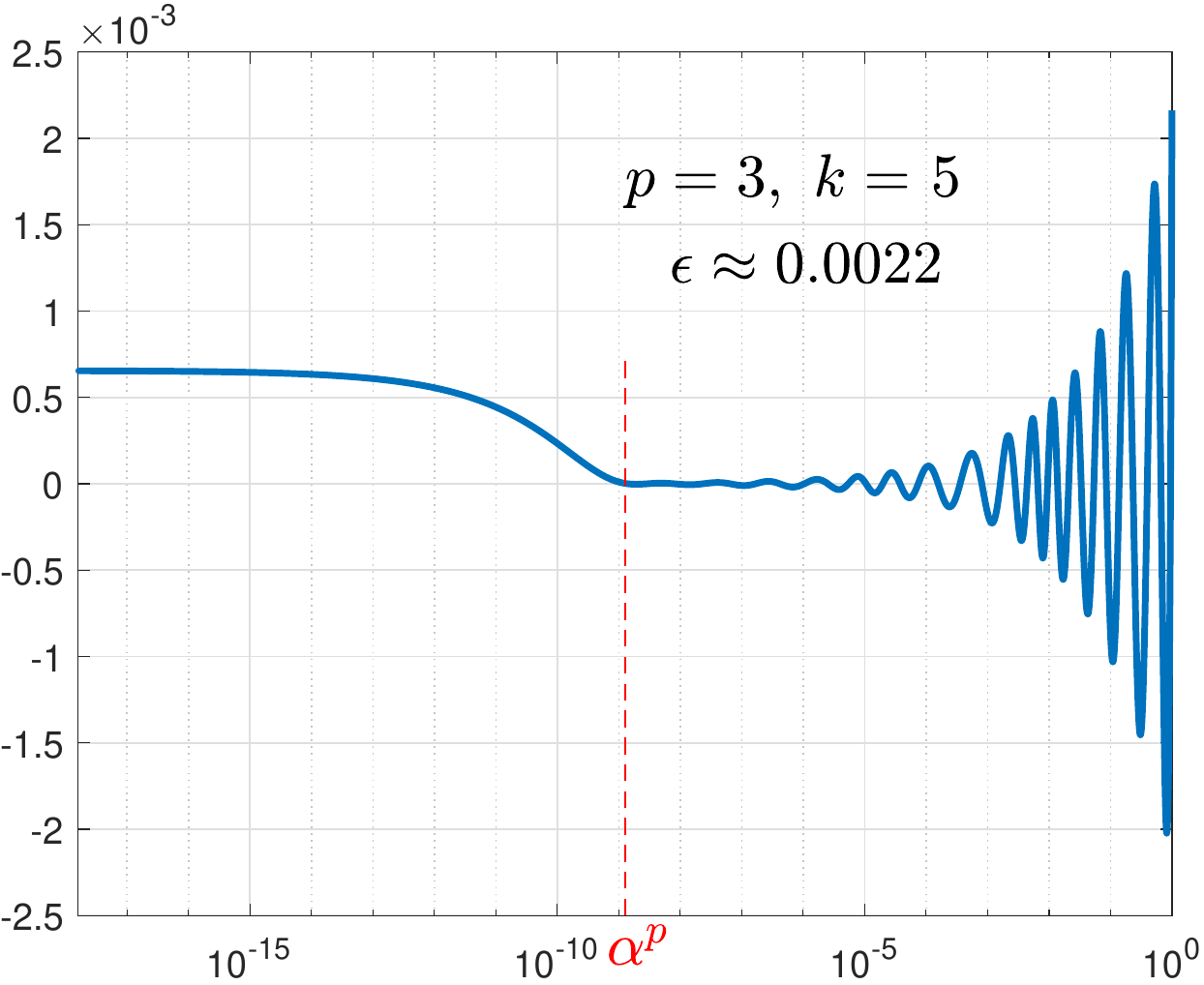}
  \end{minipage}   
  \begin{minipage}[t]{0.495\hsize}
      \includegraphics[width=0.9\textwidth]{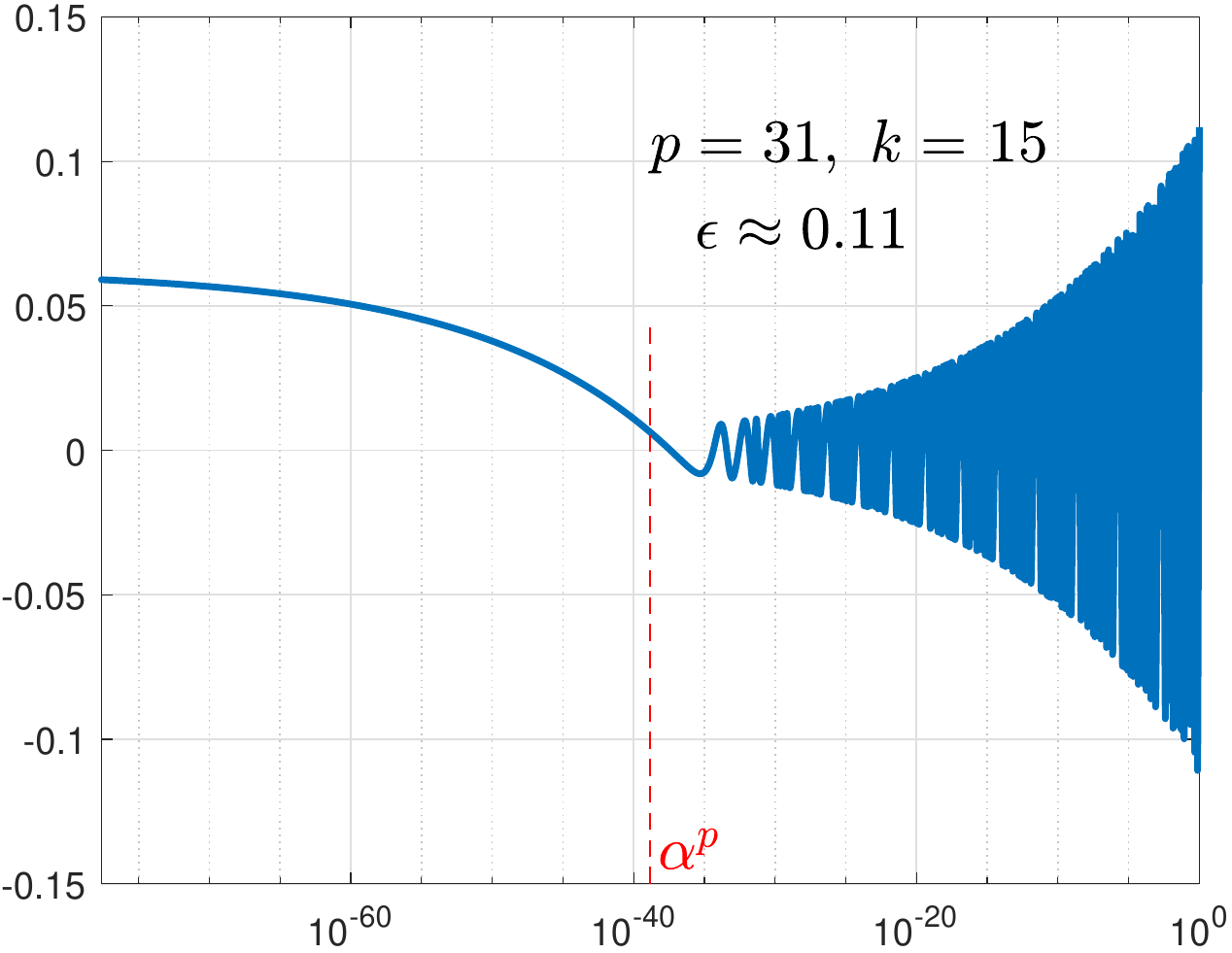}
  \end{minipage}
  \caption{Error curves $\widetilde{f}_k(x)-x^{1/p}$. 
Note that the error on $[0,\alpha^p]$ is bounded by that on $[\alpha^p,1]$, which is $\epsilon=2\alpha$ in both cases.}
  \label{fig:err}
\end{figure}

Putting these inequalities together, we conclude that 
\begin{equation}  \label{eq:kval}
k = \frac{\log \log \frac{2}{\epsilon}}{\log (\frac{p}{p-1})} + \widetilde{k}_2 + \frac{ \log\log\frac{2}{\epsilon p} }{\log 2}
\end{equation}
recursions are enough to yield accuracy $\epsilon$, where $\widetilde{k}_2$ is an integer satisfying
\[
k_2 - \frac{\log\log \frac{2}{p} \frac{1+\alpha^*}{1-\alpha^*}}{\log 2} \le \widetilde{k}_2 \le k_2 - \frac{\log\log \frac{2}{p} \frac{1+\alpha^*}{1-\alpha^*}}{\log 2} + 1.
\]
Since $k$ recursions translate into a rational function $\widetilde{f}_k$ of type $(p^{k-1},p^{k-1}-1)$, it follows that the degree $n$ of the rational function $\widetilde{f}_k$ achieving accuracy $\epsilon$ is 
\[
n = p^{\frac{\log \log \frac{2}{\epsilon}}{\log (\frac{p}{p-1})}+
\widetilde{k}_2+\frac{\log\log\frac{2}{\epsilon p}}{\log 2}-1  }. 
\]
We rewrite this to express the error with respect to the degree $n$. Taking the logarithm and absorbing the constant $-1$ into $\widetilde{k}_2$, we get 
\begin{equation}\label{eq:workoutk}
\log n = \left(\frac{\log \log \frac{2}{\epsilon}}{\log (\frac{p}{p-1})}+
\widetilde{k}_2+\frac{\log\log\frac{2}{\epsilon p}}{\log 2} \right) \log p \le \left(\frac{\log \log \frac{2}{\epsilon}}{\log (\frac{p}{p-1})}+
\widetilde{k}_2+\frac{\log\log\frac{2}{\epsilon}}{\log 2} \right) \log p.
\end{equation}
Hence,
\[
\log\log\frac{2}{\epsilon} \ge \frac{\log n-\widetilde{k}_2\log p}{
\log p\large(\frac{1}{\log (\frac{p}{p-1})}+
\frac{1}{\log 2}\large) }. 
\]
Thus, defining 
\begin{equation}  \label{eq:cp}
c:= \frac{1}{\log p\large(\frac{1}{\log (\frac{p}{p-1})}+
\frac{1}{\log 2}\large) }= 
\frac{ \log 2\log \frac{p}{p-1}}{\log p \log \frac{2p}{p-1}  }, 
\end{equation}
 we have 
\[
\log\frac{2}{\epsilon} \ge \left(\frac{n}{p^{\widetilde{k}_2}}\right)^{c},
\]
and therefore, writing $\widetilde{b} = 1/p^{c\widetilde{k}_2}$, we arrive at 
\[
\epsilon \le 2\exp(-\widetilde{b} n^{c}). 
\]
This bound holds when $n$ is a sufficiently large power of $p$.  To handle the case in which $n \in \mathbb{N}$ is not a power of $p$, we note that $\lfloor \log_p n \rfloor + 1$ recursions yield a rational function of type $(p^{\lfloor \log_p n \rfloor}, p^{\lfloor \log_p n \rfloor}-1)$, and for $n$ large enough ($n \ge N$, say) this function has error bounded above by
\[
2\exp(-\widetilde{b} (p^{\lfloor \log_p n \rfloor})^c) \le  2\exp(-\widetilde{b} p^{-c} n^c)  \le \exp(-(\widetilde{b} p^{-c}-N^{-c}\log 2) n^c). 
\]
Taking $N$ large enough yields Theorem~\ref{thm:main} with $b = \widetilde{b} p^{-c}-N^{-c}\log 2>0$.

It is easy to see by comparing~(\ref{eq:error1sector}) with~(\ref{combinedbound}) that the same analysis, this time choosing $\alpha=\epsilon$ rather than $\alpha=\epsilon/2$, also yields~\eqref{eq:sectthm} in Theorem~\ref{thm:sector}.

It remains to establish~\eqref{eq:sectthmhat}. 
For this, we take $\alpha$ fixed and use a similar argument. In this case $k_1,k_2$ can both be regarded as constants independent of $\epsilon$, since 
the error in the interval $[0,\alpha^p]$ is irrelevant. 
Therefore we write $\widehat k:=k_1+\widetilde{k}_2$, and in place of~\eqref{eq:workoutk}, the lowest degree $n$ required for $\epsilon$ accuracy on $S_{p,\alpha}$ satisfies 
$\log n\leq \left(\widehat k+\frac{\log\log\frac{2}{\epsilon p}}{\log 2} \right) \log p \le \left(\widehat k+\frac{\log\log\frac{1}{\epsilon}}{\log 2} \right) \log p$. Thus 
 defining 
\begin{equation}  \label{eq:cphat}
\widehat c:= \frac{\log 2}{\log p} (>c), 
\end{equation}
 we have 
$\log\frac{1}{\epsilon} \ge \big(\frac{n}{p^{\widehat{k}}}\big)^{\widehat c},$
and so setting $\widehat{b} = 1/p^{\widehat c\widehat{k}}$ we obtain 
$\epsilon \le \exp(-\widehat{b} n^{\widehat c})$, as required. 
\hfill$\square$

Note that for $\alpha\ll 1$ we have $\widehat k\approx k_1=
 \frac{\log \log\frac{1}{\alpha}}{\log (\frac{p}{p-1})}$, so $\widehat b\approx (\log\frac{1}{\alpha})^{-\widehat C}$  for some $\widehat C>0$ depending only on $p$, so $\widehat b$ scales like an inverse power of $\log\frac{1}{\alpha}$.

\section{Examples} \label{sec:examples}
In Figure~\ref{fig:err2} we illustrate our main result~\eqref{eq:mainthm} on approximation of $x^{1/p}$. For integers $k=1,2,\ldots,$, we compute the error $\epsilon$ of the composite rational approximants as in Figure~\ref{fig:err}, 
and plot the errors against $p^{ck}(\approx n^c)$ for $p\in\{2,5,31\}$ in log-scale. The plots also show least-squares affine fits to the convergence data for each $p$. The fact that the affine fits closely trace the data suggests the exponent $c$ in \eqref{eq:cp} 
is sharp, especially for small values of $p$. For the $p=31$ plot, which ends early because computing further data was infeasible (note e.g.\; that $\alpha^p< 10^{-70}$ for $k\geq 15$), there is a slight bend in the convergence, which suggests that our $c$ in~\eqref{eq:exponentc} might be a slight underestimate for large $p$.
\begin{figure}[htbp]
  \centering
\includegraphics[width = 80mm]{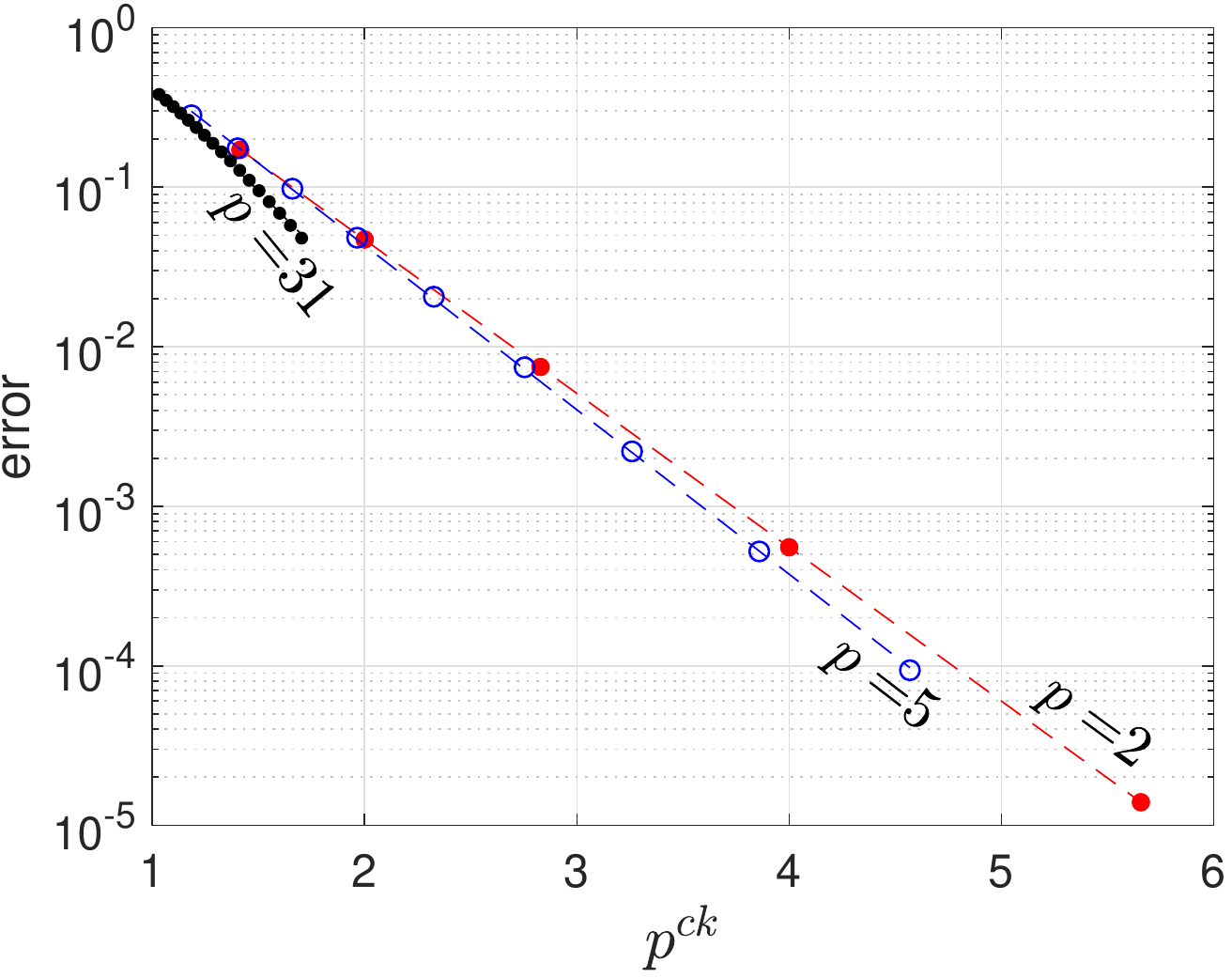}
  \caption{Error history 
$\max_{x \in [0,1]} | \widetilde{f}_k(x) - x^{1/p}  |$
for varying $k$ for $p\in\{2,5,31\}$, along with linear fits shown as dashed lines. 
}
  \label{fig:err2}
\end{figure}

Finally, Figure~\ref{fig:sector} shows the error of the approximant $\widetilde{g}_k(z)$ to $\operatorname{sect}_p(z)$, which clearly exhibits equioscillation. Note how increasing $k$ results in progressively smaller error (in log-scale), reflecting the double-exponential convergence. 
The error curves $|\widetilde{g}_k(z) - \operatorname{sect}_p(z)|$ look identical on each of the segments $[\alpha,1]\exp(2\pi{\rm i}j/p)$ for $j=0,\ldots,p-1$.

\begin{figure}[htpb]
  \begin{minipage}[t]{0.5\hsize}
      \includegraphics[width=0.9\textwidth]{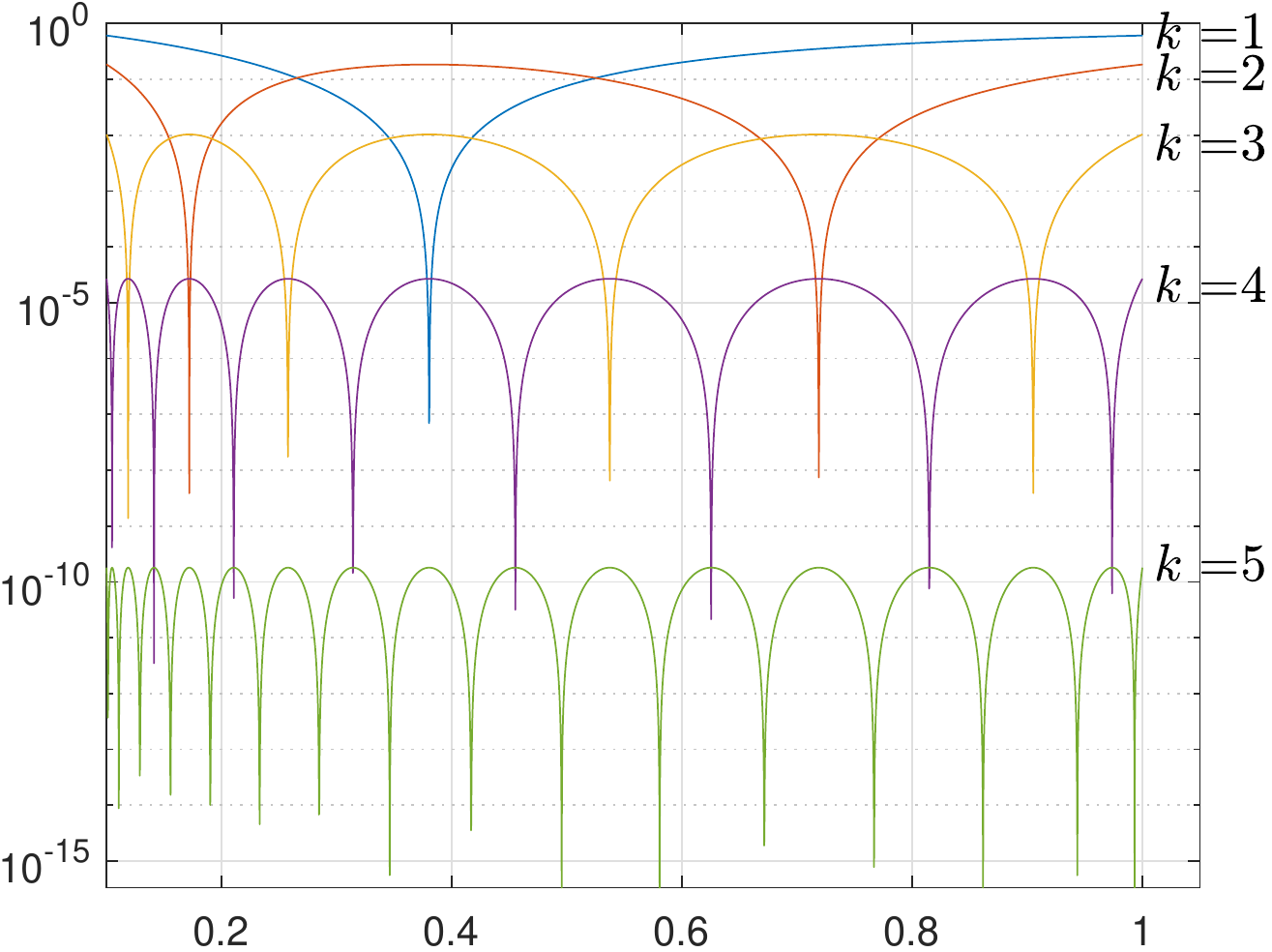}
  \end{minipage}   
  \begin{minipage}[t]{0.5\hsize}
      \includegraphics[width=0.9\textwidth]{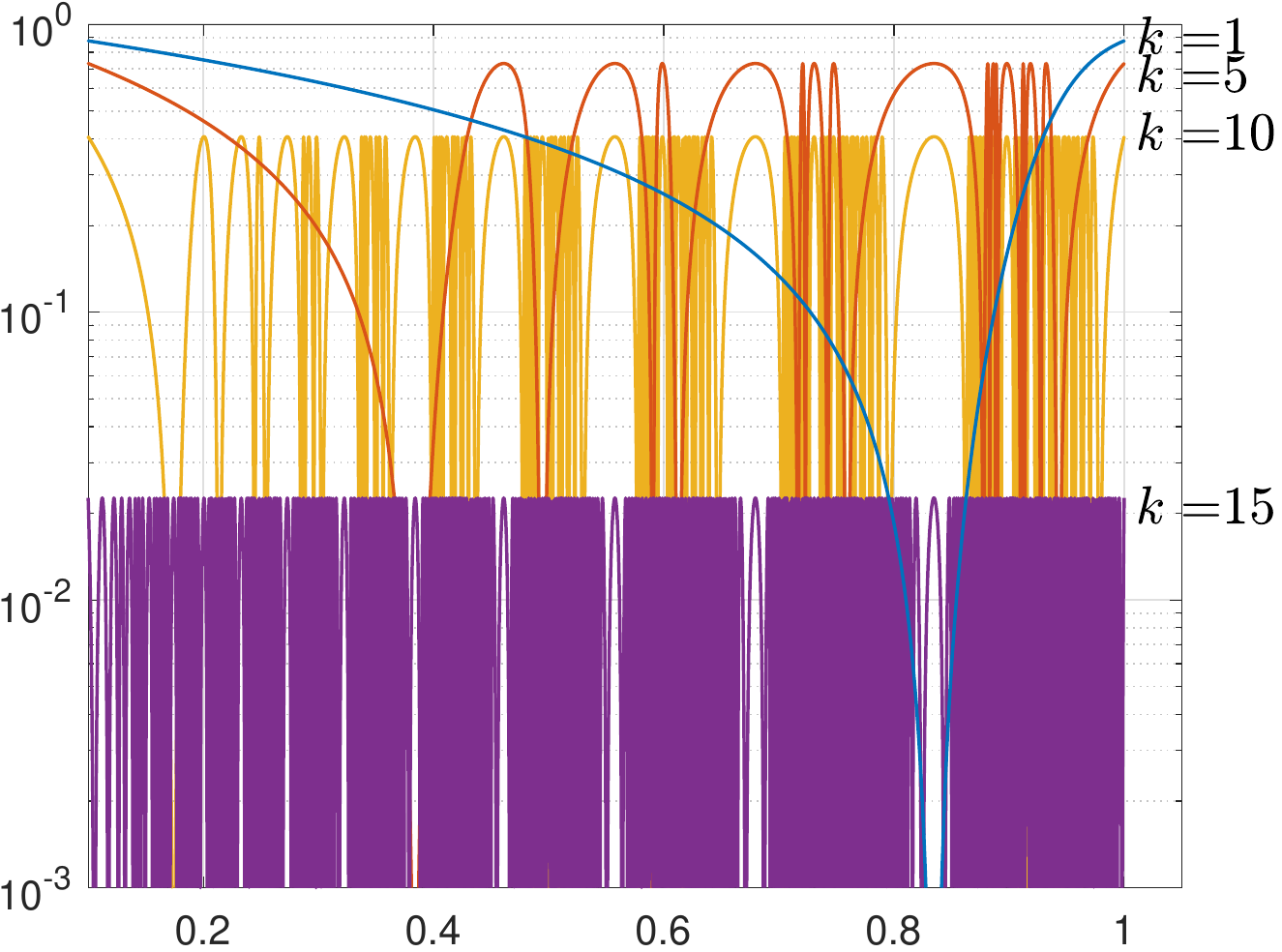}
  \end{minipage}
  \caption{
    Error $|\widetilde{g}_k(z) - \operatorname{sect}_p(z)|$ on $[\alpha,1]$ for $\alpha=0.1$, $p=3$ (left) and $p=31$ (right). The fact that the plots do not appear to go down to 0 between equioscillation points is simply an artifact of the plotting scheme, which is based on $10^4$ equispaced sample points.
}
  \label{fig:sector}
\end{figure}

\subsection*{Acknowledgment}
We thank Alex Townsend, a discussion with whom inspired this work.

\bibliographystyle{abbrv}
\bibliography{bib2}

\end{document}